\newtheorem{theorem}{Theorem}[section]
\newtheorem{proposition}{Proposition}[section]
\newtheorem{definition}{Definition}[section]
\newtheorem{lemma}{Lemma}[section]
\newtheorem{remark}{Remark}[section]
\begin{document}

\title{Existence and convergence of solutions for nonlinear biharmonic equations on graphs}
\date{ }

\author{Xiaoli Han$^{1,a}$\ \ \ Mengqiu Shao$^{1,b}$\ \ \ Liang Zhao$^{2,c}$\\
\it\small  $^{1}$  Department of Mathematical Sciences, Tsinghua University, Beijing 100084, China\\
\it\small  $^{2}$  School of Mathematical Sciences, Laboratory of Mathematics and Complex Systems of MOE, \\
\it\small Beijing Normal University, Beijing 100875, China\\
\it\small E-mail: $^{a}$ hanxiaoli@mail.tsinghua.edu.cn, $^{b}$shaomq17@mails.tsinghua.edu.cn,  ${^c}$ liangzhao@bnu.edu.cn
}
\maketitle

\begin{abstract}
In this paper, we first prove some propositions of Sobolev spaces defined on a locally finite graph $G=(V,E)$, which are fundamental when dealing with equations on graphs under the variational framework. Then we consider a nonlinear biharmonic equation
$$
\Delta^{2} u -\Delta u+(\lambda a+1)u= |u|^{p-2}u
$$
on $G=(V,E)$. Under some suitable assumptions, we prove that for any $\lambda>1$ and $p>2$, the equation admits a ground state solution $u_{\lambda}$. Moreover, we prove that as $\lambda\rightarrow +\infty$, the solutions $u_{\lambda}$ converge to a solution of the equation
$$
\left\{\aligned &\Delta^{2}u -\Delta u+u = |u|^{p-2}u, &\hbox{in}\ \ \Omega,\\
&u=0, &\hbox{on}\ \ \partial\Omega,\endaligned\right.
$$
where $\Omega=\{x\in V: a(x)=0\}$ is the potential well and $\partial\Omega$ denotes the the boundary of $\Omega$.
\end{abstract}

\vskip4pt
\noindent{\bf  Keywords:}  Sobolev space; Biharmonic equation; Locally finite graph; Ground state
\vskip4pt
\noindent{\bf MSC(2010):} 35A15, 35Q55, 58E30

\section{Introduction}

Graph is a natural structure for problems with different backgrounds, such as image processing \cite{Lezoray}, neural network \cite{Bose}, social network \cite{Arnaboldi}, etc. To do analysis works, it is necessary to study partial differential equations on graphs and this subject has attracted much attention recently. For example, several fundamental aspects of heat equations on graphs, such as heat kernel \cite{Horn,Woj}, existence and uniqueness\cite{Huang,LinWu2,LiuChenYu} are investigated by different authors.

In this paper, we use methods in functional analysis to study existence of solutions for fourth order nonlinear elliptic equations on graphs. Our ideas are inspired by works of Grigor'yan, Lin and Yang \cite{GLY1,GLY2,GLY3} where they considered several second order nonlinear elliptic equations on graphs. For example, when domains are finite graphs, they proved existence of solutions for the Kazdan-Warner equation \cite{GLY1} and the Yamabe type equation \cite{GLY2}. Later, results in \cite{GLY1,GLY2} were generalized by Ge and Jiang \cite{GeJiang1,GeJiang2} for infinite graphs and Keller and Schwarz \cite{KellerSchwarz} also studied the Kazdan-Warner equation on canonically compact graphs.

Moreover, we also study the asymptotic behavior of solutions to our equation and find that their limit is restricted on the potential well which is a finite graph. On graphs, this kind of results was first proved by Zhang and Zhao. In \cite{ZhangZhao}, they studied the following second order equation,
\begin{equation}\label{loworder}
-\Delta u+(\lambda a+1)u=|u|^{p-1}u
\end{equation}
on a locally finite graph $G=(V,E)$, where $a(x)$ is a potential function defined on $V$. If the potential well $\Omega=\{x\in V: a(x)=0\}$ is a non-empty, connected and bounded domain in $V$, their results said that, as $\lambda\rightarrow +\infty$, the ground state solutions $u_\lambda$ of \eqref{loworder} converge to a ground state solution $u_0$ of the corresponding Dirichlet equation,
\begin{align*}\label{lowdirichlet}
\begin{cases}
-\Delta u+u=|u|^{p-1}u \  &\text{in}\  \Omega;\\
u=0,\  &\text{on} \  \partial \Omega.
\end{cases}
\end{align*}

The equation \eqref{loworder} is a Sch\"{o}dinger type equation and when the domain is a subset of the Euclidean space, it has been extensively studied during the past several decades. The readers can refer to \cite{Alves,BartschWang,BrezisNirenberg,Cao,Rabinowitz,WangZhou,Yang} and the references therein. Besides the second order equations, when we consider problems with certain physical backgrounds, such as travelling waves in a suspension bridge \cite{Mckenna} and the static deflection of an elastic plate \cite{Abrahams}, there arises the higher order version of \eqref{loworder} with a biharmonic operator and it is also well studied. For example, in \cite{LiuChen}, Liu and Chen studied the multiplicity of solutions of a biharmonic Sch\"{o}dinger equation with critical growth. In \cite{NiuTangWang}, Niu, Tang and Wang studied the asymptotic behavior of ground state solutions for a nonlinear biharmonic equation on $\mathbb{R}^N$. For more results related to nonlinear biharmonic equations on the Euclidean space, one can refer to \cite{GuoHuangZhou,Sani,WS,ZhangLouJiShao,ZhaoZhang} and the references therein.

Noticing the above works, the main purpose of this paper is to investigate whether the asymptotic result in \cite{ZhangZhao} still holds on graphs for nonlinear biharmonic equations. To describe our problems and results, we first introduce some concepts and assumptions. let $G = (V, E)$ be a graph, where $V$ denotes the set of vertices and $E$ denotes the set of edges. In this paper, we always assume that $G$ satisfies the following conditions.

\noindent$(G_{1})$ {\it Locally finite. For any $x \in V$, there are only finite $y\in V$ such that $ xy\in E$ and $G$ is called a locally finite graph. }

\noindent$(G_{2})$ {\it Connected. $G$ is called connected if any two vertices $x$ and $y$ can be connected via finite edges.}

\noindent$(G_{3})$ {\it Uniformly positive measure. For a measure $\mu:V\rightarrow \mathbb{R}^{+}$ defined on $V$, we assume that there exists a constant $\mu_{\min}>0$ such that $\mu(x)\geq\mu_{\min}$ for all $x\in V$.}

\noindent$(G_{4})$ {\it Symmetric. For an edge $xy\in E$, we assume it has a positive weight $\omega_{xy}$ and it is symmetric, namely $\omega_{xy}=\omega_{yx}$. Furthermore, we assume that for any $x\in V$, $\underset{y\sim x}\sum\omega_{xy}<C$, where $C$ is a universal constant. Here and throughout this paper, $y\sim x$ stands for any vertex $y$ connected with $x$ by an edge $xy\in E$.}

The distance $d(x,y)$ of two vertices $x,y\in V$ is defined by the minimal number
of edges which connect these two vertices. For a subset $\Omega$ of $V$, if the distance $d(x,y)$ is uniformly bounded from above for any $x,y\in\Omega$, we call $\Omega$ a bounded domain in $V$. We shall remark that a bounded domain of a locally finite graph can contain only finite vertices. The boundary of $\Omega$ in $V$ is defined by
$$\partial\Omega:=\{y\in V, y\notin\Omega: \exists x\in\Omega\ \ \hbox{such that} \ \ xy\in E\}$$
and the interior of $\Omega$ is denoted by $\Omega^{\circ}$. Obviously, we have that $\Omega^{\circ}=\Omega$.

For any function $u:V\rightarrow \mathbb{R}$ and $x\in V$, the $\mu$-Laplacian (or Laplacian for short) of $u$ at $x$ is defined by
$$\Delta u(x):=\frac{1}{\mu(x)}\underset{y\sim x}\sum \omega_{xy}(u(y)-u(x)).$$
The  gradient form $\Gamma(u,v)$ of two functions $u$ and  $v$ at $x\in V$ is
$$\Gamma(u,v)(x):=\frac{1}{2\mu(x)}\underset{y\sim x}\sum\omega_{xy}(u(y)-u(x))(v(y)-v(x)).$$
For brevity, we use $\Gamma(u)$ for $\Gamma(u,u)$ and sometimes we use $\nabla u\nabla v$ instead of $\Gamma(u,v)$.
The length of $\Gamma(u)$ at $x\in V$ is denoted by
$$|\nabla u|(x)=\sqrt{\Gamma(u)(x)}=\left(\frac{1}{2\mu(x)}\underset{y\sim x}\sum\omega_{xy}(u(y)-u(x))^{2}\right)^{\frac{1}{2}}.$$
For any function $u: V\rightarrow\mathbb{R}$, an integral of $u$ over $V$ is defined by
$$\int_{V}ud\mu=\underset{x\in V}\sum\mu (x)u(x).$$
The biharmonic operator of $u: V\rightarrow\mathbb{R}$, namely $\Delta^{2} u$, is defined in the distributional sense by
$$\int_{V}(\Delta^{2}u)\phi d\mu=\int_{V}\Delta u \Delta\phi d\mu,\ \ \forall\phi\in C_{c}(V),$$
where $C_{c}(V):=\{u:V\rightarrow\mathbb{R}:\{x\in V: u(x)\neq 0\} \ \ \hbox{is of  finite cardinality}\}.$

In this paper, we focus on the following nonlinear biharmonic equation
\begin{equation}\label{equation}
 \Delta^{2} u -\Delta u+(\lambda a+1)u= |u|^{p-2}u
\end{equation}
on a graph $G=(V,E)$ satisfying $(G_1)-(G_4)$. Here $\lambda>1$ and $p>2$ are constants and $a(x):V\rightarrow \mathbb{R}$ is a potential satisfying:

\noindent$(A_{1})$ {\it $a(x)\geq 0$ and the potential well $\Omega=\{x\in V: a(x)=0\}$ is a non-empty, connected and bounded domain in $V$.}

\noindent$(A_{2})$ {\it There exists a vertex $x_{0}\in V$ such that $a(x)\rightarrow +\infty$ as $d(x,x_{0})\rightarrow +\infty$.}

Let $W^{2,2}(V)$ be the completion of $C_{c}(V)$ under the norm
$$\|u\|_{W^{2,2}(V)}=\left(\int_{V}(|\Delta u|^{2}+|\nabla u|^{2}+u^{2})d\mu\right)^{\frac{1}{2}}.$$
Clearly, $W^{2,2}(V)$ is a Hilbert space with the inner product
$$(u,v)_{W^{2,2}(V)}=\int_{V}(\Delta u\Delta v+\nabla u\nabla v+uv)d\mu,\ \ \forall u,v\in W^{2,2}(V).$$
To study the problem \eqref{equation}, it is natural to consider a function space
$$E_{\lambda}:=\{u\in W^{2,2}(V):\int_{V} \lambda a u^{2}d\mu<+\infty\}$$
with the norm
$$\|u\|_{E_{\lambda}}=\left(\int_{V}(|\Delta u|^{2}+|\nabla u|^{2}+(\lambda a+1)u^{2})d\mu\right)^{\frac{1}{2}}.$$
The space $E_{\lambda}$ is also a Hilbert space with its  inner product
$$(u,v)_{E_{\lambda}}=\int_{V}(\Delta u\Delta v+\nabla u\nabla v+(\lambda a+1)uv)d\mu,\ \ \forall u,v\in E_{\lambda}.$$

The functional related to \eqref{equation} is
\begin{equation}\label{functional}
J_{\lambda}(u)=\frac{1}{2}\int_{V}(|\Delta u|^{2}+|\nabla u|^{2}+(\lambda a+1)u^{2})d\mu-\frac{1}{p}\int_{V}|u|^{p}d\mu.
\end{equation}
We can easily verify that $J_{\lambda}\in C^{1}(E_{\lambda},\mathbb{R})$
and
\begin{equation}\label{frechet}
J^{'}_{\lambda}(u)v=\int_{V}(\Delta u\Delta v+\nabla u\nabla v+(\lambda a+1)uv)d\mu-\int_{V}|u|^{p-2}uvd\mu,\ \ \forall v\in E_{\lambda}.
\end{equation}
The Nehari manifold related to \eqref{equation} is defined as
$$\mathcal{N}_{\lambda}:=\{u\in E_{\lambda}\setminus\{0\}:J'_{\lambda}(u)u=0\}.$$
Let $m_{\lambda}$ be
\begin{equation}\label{minimum}
m_{\lambda}:=\underset{u\in \mathcal N_{\lambda}}\inf J_{\lambda}(u).
\end{equation}
If $m_{\lambda}$ can be achieved by some function $u_{\lambda}\in\mathcal{N}_{\lambda}$,  $u_{\lambda}$  have the the least energy among all functions belong to the Nehari manifold and in fact, it is a critical point of the functional $J_{\lambda}$. We call $u_{\lambda}$ a ground state solution of \eqref{equation}. Our first result is the following theorem:

\begin{theorem}\label{existence1}
Let $G=(V,E)$ be a graph satisfies $(G_1)-(G_4)$. Assume $a(x):V\rightarrow [0,+\infty)$ is a function satisfying $(A_{1})$ and $(A_{2})$. Then for any positive constants $\lambda>1$ and $p>2$, there exists a ground state solution $u_{\lambda}$ of the equation \eqref{equation}.
\end{theorem}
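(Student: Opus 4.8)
The plan is to realise $u_\lambda$ as a minimiser of the energy $J_\lambda$ restricted to the Nehari manifold $\mathcal N_\lambda$, and then to show that such a constrained minimiser is in fact a free critical point of $J_\lambda$, hence a pointwise solution of \eqref{equation}.

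First I would study the fibering maps. Fix $u\in E_\lambda\setminus\{0\}$; since $G$ is connected and $u\not\equiv 0$ we have $\int_V|u|^p\,d\mu>0$, so the function $\varphi_u(t):=J_\lambda(tu)=\frac{t^2}{2}\|u\|_{E_\lambda}^2-\frac{t^p}{p}\int_V|u|^p\,d\mu$, $t>0$, has (because $p>2$) a unique critical point $t(u)=\big(\|u\|_{E_\lambda}^2/\int_V|u|^p\,d\mu\big)^{1/(p-2)}$, which is its strict global maximum; hence $t(u)u\in\mathcal N_\lambda$ and $\mathcal N_\lambda\neq\emptyset$. On $\mathcal N_\lambda$ one has $J_\lambda(u)=\big(\tfrac12-\tfrac1p\big)\|u\|_{E_\lambda}^2\ge0$, and combining the Nehari identity $\|u\|_{E_\lambda}^2=\int_V|u|^p\,d\mu$ with the continuous Sobolev embedding $E_\lambda\hookrightarrow L^p(V)$ (so that $\int_V|u|^p\,d\mu\le C\|u\|_{E_\lambda}^p$) gives $\|u\|_{E_\lambda}\ge c_0>0$ uniformly on $\mathcal N_\lambda$; therefore $m_\lambda\ge\big(\tfrac12-\tfrac1p\big)c_0^2>0$.

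Next, pick a minimising sequence $\{u_n\}\subset\mathcal N_\lambda$ with $J_\lambda(u_n)\to m_\lambda$. From $J_\lambda(u_n)=\big(\tfrac12-\tfrac1p\big)\|u_n\|_{E_\lambda}^2$ the sequence is bounded in $E_\lambda$, so along a subsequence $u_n\rightharpoonup u$ weakly in $E_\lambda$. The decisive point is the compactness of the embedding $E_\lambda\hookrightarrow L^q(V)$ for $q\in[2,\infty]$, which holds thanks to $(A_2)$: since $a(x)\to+\infty$ as $d(x,x_0)\to+\infty$, for every $\varepsilon>0$ there is a finite set $K\subset V$ with $\int_{V\setminus K}u_n^2\,d\mu\le\big(\lambda\inf_{V\setminus K}a\big)^{-1}\|u_n\|_{E_\lambda}^2<\varepsilon$ uniformly in $n$, while on the finite set $K$ weak convergence in $E_\lambda$ forces pointwise, hence $L^2(K)$, convergence; together with the elementary bound $\|v\|_{L^\infty}\le\mu_{\min}^{-1/2}\|v\|_{L^2}$ coming from $(G_3)$ this upgrades $u_n\to u$ to strong convergence in $L^q(V)$ for every $q\in(2,\infty]$. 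In particular $\int_V|u_n|^p\,d\mu\to\int_V|u|^p\,d\mu$, and since $\int_V|u_n|^p\,d\mu=\|u_n\|_{E_\lambda}^2\ge c_0^2$ we get $\int_V|u|^p\,d\mu\ge c_0^2>0$, so $u\neq0$.

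Now I would identify the limit. Weak lower semicontinuity of $\|\cdot\|_{E_\lambda}$ gives $\|u\|_{E_\lambda}^2\le\liminf_n\|u_n\|_{E_\lambda}^2=\lim_n\int_V|u_n|^p\,d\mu=\int_V|u|^p\,d\mu$, i.e. $J_\lambda'(u)u\le0$; since $u\neq0$ and $p>2$, the fibering analysis yields a unique $t\in(0,1]$ with $tu\in\mathcal N_\lambda$, and then
$$
m_\lambda\le J_\lambda(tu)=\Big(\tfrac12-\tfrac1p\Big)t^2\|u\|_{E_\lambda}^2\le\Big(\tfrac12-\tfrac1p\Big)\|u\|_{E_\lambda}^2\le\Big(\tfrac12-\tfrac1p\Big)\lim_{n\to\infty}\|u_n\|_{E_\lambda}^2=\lim_{n\to\infty}J_\lambda(u_n)=m_\lambda .
$$
Equality throughout forces $t=1$, so $u\in\mathcal N_\lambda$ with $J_\lambda(u)=m_\lambda$, and also $\|u_n\|_{E_\lambda}\to\|u\|_{E_\lambda}$, which with the weak convergence gives $u_n\to u$ strongly in $E_\lambda$. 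Finally, to see that $u=:u_\lambda$ is a critical point of $J_\lambda$ on all of $E_\lambda$, write $G(u)=J_\lambda'(u)u$; on $\mathcal N_\lambda$ one computes $G'(u)u=2\|u\|_{E_\lambda}^2-p\int_V|u|^p\,d\mu=(2-p)\|u\|_{E_\lambda}^2\neq0$, so $\mathcal N_\lambda$ is a $C^1$-manifold near $u$ and the Lagrange multiplier rule gives $J_\lambda'(u)=\sigma G'(u)$ for some $\sigma\in\mathbb R$; pairing with $u$ yields $0=J_\lambda'(u)u=\sigma(2-p)\|u\|_{E_\lambda}^2$, hence $\sigma=0$ and $J_\lambda'(u)=0$ in $E_\lambda^\ast$. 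Testing \eqref{frechet} against the finitely supported functions $\mathbf{1}_{\{x\}}\in C_c(V)\subset E_\lambda$ and using the distributional definition of $\Delta^2$ together with summation by parts then turns this weak identity into the pointwise equation \eqref{equation}, so $u_\lambda$ is a ground state solution.

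I expect the only real obstacle to be the compactness assertion, namely the compact embedding $E_\lambda\hookrightarrow L^p(V)$, which is the graph analogue of the Bartsch--Wang concentration-compactness for Schr\"odinger operators with a potential well and is exactly where $(A_2)$ enters; everything else is the standard Nehari-manifold machinery. Two auxiliary facts should also be checked carefully (they are presumably part of the Sobolev-space propositions established earlier in the paper): that $J_\lambda\in C^1(E_\lambda,\mathbb R)$ with derivative \eqref{frechet}, and that $\mathbf{1}_{\{x\}}\in E_\lambda$ for each $x\in V$ so that the weak formulation may legitimately be tested on $\delta$-functions.
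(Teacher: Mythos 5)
Your proposal is correct and follows essentially the same route as the paper: the Nehari-manifold minimization, the positivity of $m_\lambda$ via the embedding $E_\lambda\hookrightarrow L^p(V)$, the compactness supplied by $(A_2)$ (the paper isolates this as Lemma \ref{e}), and the weak-lower-semicontinuity/fibering argument forcing $t=1$ are all exactly the paper's Lemmas \ref{h}--\ref{j}. The only (immaterial) deviation is the final criticality step, where you invoke the Lagrange multiplier rule on $\mathcal N_\lambda$ while the paper differentiates $\gamma(s)=J_\lambda\bigl(t(s)(u_\lambda+s\phi)\bigr)$ at $s=0$ (Lemma \ref{k}); both are standard and equivalent here.
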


For the asymptotic behavior of $u_{\lambda}$ as $\lambda\rightarrow+\infty$, we introduce the limit problem which is defined on the potential well $\Omega$:
\begin{align}\label{dirichlet}
\begin{cases}
\Delta^{2} u-\Delta u+u=|u|^{p-2}u \  &\text{in}\  \Omega;\\
u=0,\  &\text{on} \  \partial \Omega.
\end{cases}
\end{align}

Define the space $W^{2,2}(\Omega)$ as a set of all functions $u:V\rightarrow\mathbb{R}$ under the norm
$$\|u\|_{W^{2,2}(\Omega)}=\left(\int_{\Omega\cup\partial\Omega}(|\Delta u|^{2}+|\nabla u|^{2})d\mu+\int_{\Omega}u^{2}d\mu\right)^{\frac{1}{2}}.$$
It is suitable to study \eqref{dirichlet} in the space $H(\Omega):=W^{2,2}(\Omega)\cap W^{1,2}_{0}(\Omega)$ , where $W^{1,2}_{0}(\Omega)$ is the completion of $C_{c}(\Omega)$ under the norm
$$\|u\|_{W^{1,2}_{0}(\Omega)}=\left(\int_{\Omega\cup\partial\Omega}|\nabla u|^{2}d\mu+\int_{\Omega}u^{2}d\mu\right)^{\frac{1}{2}},$$
where $C_{c}(\Omega)$ denotes the set of all functions $u: \Omega\rightarrow\mathbb{R}$ satisfying $supp\  u\subset\Omega$ and $u=0$ on $\partial\Omega$.
The space $H(\Omega)$ endowed with the inner product
$$(u,v)_{H(\Omega)}=\int_{\Omega\cup\partial\Omega}(\Delta u\Delta v+\nabla u\nabla v)d\mu+\int_{\Omega}uvd\mu$$
is a Hilbert space.
The functional related to \eqref{dirichlet} is
\begin{equation}\label{functional_d}
J_{\Omega}(u)=\frac{1}{2}\int_{\Omega\cup\partial\Omega}(|\Delta u|^{2}+|\nabla u|^{2})d\mu+\frac{1}{2}\int_{\Omega}u^{2}d\mu-\frac{1}{p}\int_{\Omega}|u|^{p}d\mu.
\end{equation}
The corresponding Nehari manifold is
$$\mathcal{N}_{\Omega}:=\{u\in H(\Omega)\setminus\{0\}:J^{'}_{\Omega}(u)u=0\}.$$
And
$$m_{\Omega}:=\underset{u\in \mathcal{N}_{\Omega}}\inf J_{\Omega}(u).$$

Similar to Theorem 1.1, the equation \eqref{dirichlet} also has a ground state solution.

\begin{theorem}\label{existence2}
Let $G=(V,E)$ be a graph satisfies $(G_1)-(G_4)$ and $\Omega$ be a non-empty, connected and bounded domain in $V$. Then for any $p>2$, the equation \eqref{dirichlet} has a ground state solution $u_{0}\in H(\Omega)$.
\end{theorem}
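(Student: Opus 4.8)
The plan is to obtain $u_0$ as a minimizer of $J_\Omega$ on the Nehari manifold $\mathcal N_\Omega$, exploiting the crucial fact that $\Omega\cup\partial\Omega$ is a \emph{finite} set (a bounded domain of a locally finite graph contains only finitely many vertices). This finiteness makes $H(\Omega)$ a finite-dimensional Hilbert space, so all norms on it are equivalent and bounded sets are precompact; the usual compactness obstructions of variational problems on unbounded domains simply disappear.

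First I would record the structural facts about the fibering map. For fixed $u\in H(\Omega)\setminus\{0\}$, set $g(t)=J_\Omega(tu)$ for $t>0$; then $g(t)=\tfrac{t^2}{2}\|u\|_{H(\Omega)}^2-\tfrac{t^p}{p}\int_\Omega|u|^p\,d\mu$, where I write $\|u\|_{H(\Omega)}^2=\int_{\Omega\cup\partial\Omega}(|\Delta u|^2+|\nabla u|^2)\,d\mu+\int_\Omega u^2\,d\mu$. Since $p>2$ and (by connectedness of $\Omega$ and the boundary condition $u=0$ on $\partial\Omega$) the quantity $\int_\Omega|u|^p\,d\mu>0$ whenever $u\not\equiv 0$ on $\Omega$, the function $g$ has a unique critical point $t_u=\bigl(\|u\|_{H(\Omega)}^2/\int_\Omega|u|^p\,d\mu\bigr)^{1/(p-2)}>0$, which is its strict global maximum on $(0,\infty)$. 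Hence $t_u u\in\mathcal N_\Omega$, so $\mathcal N_\Omega\neq\varnothing$, and on $\mathcal N_\Omega$ one has $\int_\Omega|u|^p\,d\mu=\|u\|_{H(\Omega)}^2$ and therefore
\begin{equation}\label{eq:energy-on-nehari}
J_\Omega(u)=\Bigl(\tfrac12-\tfrac1p\Bigr)\|u\|_{H(\Omega)}^2=\Bigl(\tfrac12-\tfrac1p\Bigr)\int_\Omega|u|^p\,d\mu.
\end{equation}
In particular $m_\Omega\ge 0$, and a short argument using \eqref{eq:energy-on-nehari} together with the equivalence of the $L^p$ and $H(\Omega)$ norms on the finite-dimensional space $H(\Omega)$ shows $\|u\|_{H(\Omega)}$ is bounded below by a positive constant on $\mathcal N_\Omega$, whence $m_\Omega>0$.

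Next I would run the direct method. Take a minimizing sequence $\{u_n\}\subset\mathcal N_\Omega$ with $J_\Omega(u_n)\to m_\Omega$. By \eqref{eq:energy-on-nehari} the sequence $\{u_n\}$ is bounded in $H(\Omega)$; since $\dim H(\Omega)<\infty$, after passing to a subsequence $u_n\to u_0$ in $H(\Omega)$, and convergence in this space is just pointwise convergence at each of the finitely many vertices. Passing to the limit in $\int_\Omega|u_n|^p\,d\mu=\|u_n\|_{H(\Omega)}^2$ gives $\int_\Omega|u_0|^p\,d\mu=\|u_0\|_{H(\Omega)}^2$; the lower bound $\|u_n\|_{H(\Omega)}\ge c>0$ forces $u_0\neq 0$, so $u_0\in\mathcal N_\Omega$, and continuity of $J_\Omega$ gives $J_\Omega(u_0)=m_\Omega$. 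Thus $m_\Omega$ is attained. Finally I would verify that the constrained minimizer is a free critical point: by the Lagrange multiplier rule there is $\theta\in\mathbb R$ with $J_\Omega'(u_0)=\theta\,\Psi'(u_0)$ where $\Psi(u)=J_\Omega'(u)u$; testing against $u_0$ and using $J_\Omega'(u_0)u_0=0$ together with $\Psi'(u_0)u_0=(2-p)\|u_0\|_{H(\Omega)}^2\neq 0$ yields $\theta=0$, so $J_\Omega'(u_0)=0$. Unwinding the definition of $J_\Omega'$ and integrating by parts (the discrete Green-type identity $\int_{\Omega\cup\partial\Omega}\nabla u\nabla\phi\,d\mu=-\int\Delta u\,\phi\,d\mu$ for test functions supported in $\Omega$, which holds because $\phi$ vanishes on $\partial\Omega$) shows $u_0$ solves \eqref{dirichlet} pointwise on $\Omega$, while $u_0\in H(\Omega)\subset W^{1,2}_0(\Omega)$ encodes the boundary condition. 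Therefore $u_0$ is a ground state solution.

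Because $H(\Omega)$ is finite-dimensional, none of the steps presents a genuine analytic obstacle; the only points requiring a little care are (i) checking that $\int_\Omega|u|^p\,d\mu>0$ for every nonzero $u\in H(\Omega)$, which could fail only if some $u\in H(\Omega)$ were supported entirely on $\partial\Omega$ — ruled out since elements of $H(\Omega)$ vanish on $\partial\Omega$ — and (ii) the discrete integration-by-parts bookkeeping that correctly matches the distributional definition of $\Delta^2$ on $V$ with the restricted bilinear form on $\Omega\cup\partial\Omega$. I expect (ii) to be the most delicate part of the write-up, though it is routine.
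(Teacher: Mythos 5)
Your proof is correct, but it follows a genuinely different route from the paper's. You minimize $J_\Omega$ directly on the Nehari manifold $\mathcal N_\Omega$ inside the finite-dimensional space $H(\Omega)$ (finite-dimensionality following from the fact that a bounded domain of a locally finite graph has finitely many vertices, which is exactly the paper's Lemma \ref{f}), and then remove the constraint by the Lagrange multiplier rule; all the compactness is free. The paper deliberately avoids this: as announced in the introduction, it obtains Theorem \ref{existence2} as a byproduct of Theorem \ref{convergence}, namely by taking the ground states $u_{\lambda_k}$ of \eqref{equation} from Theorem \ref{existence1}, proving via Lemma \ref{groundstate} that $m_{\lambda}\to m_\Omega$, showing the weak limit $u_0$ vanishes on $\Omega^c$, and verifying $J_\Omega(u_0)=m_\Omega$ and $J'_\Omega(u_0)=0$ in the limit. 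Your argument is more elementary and self-contained (it needs neither Theorem \ref{existence1} nor the potential-well asymptotics, and a variant of it appears in a commented-out draft section of the source using Ekeland's principle), whereas the paper's route proves Theorems \ref{existence2} and \ref{convergence} simultaneously and avoids redoing the Nehari analysis on $\Omega$. Two small remarks: the constrained-to-free step could also be done without Lagrange multipliers by the fibering-map perturbation argument the paper uses in Lemma \ref{k}; and your integration-by-parts bookkeeping in step (ii) is precisely the content of the paper's Lemmas \ref{b} and \ref{d}, so it is already available and indeed routine.
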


Finally, as $\lambda\rightarrow +\infty$, we prove that the solutions $u_{\lambda}$ converge to a solution of \eqref{dirichlet}. More precisely, we have

\begin{theorem}\label{convergence}
Under the same assumptions as in Theorem 1.1, we have that, for any sequence $\lambda_{k}\rightarrow \infty$, up to a subsequence, the corresponding ground state solutions $u_{\lambda_{k}}$ of \eqref{equation} converge in $W^{2,2}(V)$ to a ground state solution of \eqref{dirichlet}.
\end{theorem}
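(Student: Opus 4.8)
The plan is to run the standard potential-well scheme adapted to the graph setting: obtain bounds on $u_\lambda$ that are uniform in $\lambda$, extract a weak limit $u_0$, show that $u_0$ is trapped in $\overline\Omega$ and solves \eqref{dirichlet}, show that $u_0\ne0$ and is a ground state, and finally upgrade weak convergence to strong convergence in $W^{2,2}(V)$. First I would record that, since $u_\lambda\in\mathcal N_\lambda$ is a critical point of $J_\lambda$,
$$m_\lambda=J_\lambda(u_\lambda)-\tfrac1p J'_\lambda(u_\lambda)u_\lambda=\Big(\tfrac12-\tfrac1p\Big)\|u_\lambda\|_{E_\lambda}^2=\Big(\tfrac12-\tfrac1p\Big)\int_V|u_\lambda|^p\,d\mu .$$
Extending any $v\in\mathcal N_\Omega$ by zero, one checks that $\Delta v$ and $\nabla v$ vanish off $\Omega\cup\partial\Omega$ and $v$ vanishes off $\Omega$, so $\|v\|_{E_\lambda}=\|v\|_{H(\Omega)}$ and $\int_V|v|^p\,d\mu=\int_\Omega|v|^p\,d\mu$; hence $v\in\mathcal N_\lambda$ with $J_\lambda(v)=J_\Omega(v)$, and therefore $m_\lambda\le m_\Omega$ for all $\lambda>1$. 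Together with the $\lambda$-independent Sobolev embedding $W^{2,2}(V)\hookrightarrow\ell^p(V)$ (which forces $\|u\|_{E_\lambda}\ge c>0$ on $\mathcal N_\lambda$ since $\|u\|_{W^{2,2}(V)}\le\|u\|_{E_\lambda}$), this gives $0<c_0\le m_\lambda\le m_\Omega$ and a bound on $\|u_\lambda\|_{W^{2,2}(V)}\le\|u_\lambda\|_{E_\lambda}$ that is independent of $\lambda$.

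Next, for a sequence $\lambda_k\to\infty$ I would extract a subsequence with $u_{\lambda_k}\rightharpoonup u_0$ in $W^{2,2}(V)$; since $\mu(x)u(x)^2\le\|u\|_{W^{2,2}(V)}^2$ the sequence is bounded in $\ell^\infty(V)$, so a diagonal argument gives $u_{\lambda_k}(x)\to u_0(x)$ for every $x\in V$. From the uniform bound $\int_V\lambda_k a\,u_{\lambda_k}^2\,d\mu\le C$ one gets $\int_V a\,u_{\lambda_k}^2\,d\mu\le C/\lambda_k\to0$, so Fatou's lemma forces $\int_V a\,u_0^2\,d\mu=0$; since $a>0$ off $\Omega$, $u_0$ vanishes on $V\setminus\Omega$ (in particular on $\partial\Omega$), and being finitely supported (as $\Omega$ is finite) $u_0\in C_c(\Omega)\subset H(\Omega)$.

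The crucial and least routine step is compactness. Using $(A_2)$, outside a ball $B_R=\{x:d(x,x_0)\le R\}$ one has $a\ge A_R$ with $A_R\to\infty$ as $R\to\infty$, so $\sum_{x\notin B_R}\mu(x)u_{\lambda_k}(x)^2\le A_R^{-1}\int_V a\,u_{\lambda_k}^2\,d\mu\le C/A_R$ uniformly in $k$; combined with the $\ell^\infty$ bound this makes $\{u_{\lambda_k}\}$ tight in $\ell^p(V)$, whence $\int_V|u_{\lambda_k}|^p\,d\mu\to\int_V|u_0|^p\,d\mu=\int_\Omega|u_0|^p\,d\mu$. In particular $\int_\Omega|u_0|^p\,d\mu=\lim_k\|u_{\lambda_k}\|_{E_{\lambda_k}}^2\ge c^2>0$, so $u_0\ne0$. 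Then for each $\phi\in C_c(\Omega)$ I would pass to the limit in $J'_{\lambda_k}(u_{\lambda_k})\phi=0$ — the $\lambda_k a$-term is absent because $\phi\equiv0$ off $\Omega$ and $a\equiv0$ on $\Omega$, and all the sums run over the finite set $\Omega\cup\partial\Omega$, so pointwise convergence and the $\ell^\infty$ bound suffice — obtaining $J'_\Omega(u_0)\phi=0$, hence $J'_\Omega(u_0)=0$ and $u_0\in\mathcal N_\Omega$; that is, $u_0$ is a nontrivial solution of \eqref{dirichlet}.

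Finally, since $u_0\in\mathcal N_\Omega$, $m_{\lambda_k}=(\tfrac12-\tfrac1p)\int_V|u_{\lambda_k}|^p\,d\mu\to(\tfrac12-\tfrac1p)\int_\Omega|u_0|^p\,d\mu=J_\Omega(u_0)\ge m_\Omega$, and together with $m_{\lambda_k}\le m_\Omega$ this forces $J_\Omega(u_0)=m_\Omega$, so $u_0$ is a ground state of \eqref{dirichlet}. For the convergence, note that $\|u_0\|_{W^{2,2}(V)}=\|u_0\|_{H(\Omega)}$ since $u_0$ is supported in $\Omega$ and vanishes on $\partial\Omega$, so weak lower semicontinuity gives
$$\|u_0\|_{W^{2,2}(V)}^2\le\liminf_k\|u_{\lambda_k}\|_{W^{2,2}(V)}^2\le\limsup_k\|u_{\lambda_k}\|_{W^{2,2}(V)}^2\le\lim_k\|u_{\lambda_k}\|_{E_{\lambda_k}}^2=\int_\Omega|u_0|^p\,d\mu=\|u_0\|_{H(\Omega)}^2 ,$$
hence $\|u_{\lambda_k}\|_{W^{2,2}(V)}\to\|u_0\|_{W^{2,2}(V)}$; with $u_{\lambda_k}\rightharpoonup u_0$ in the Hilbert space $W^{2,2}(V)$ this yields $u_{\lambda_k}\to u_0$ strongly in $W^{2,2}(V)$. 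I expect the tightness/compactness step to be the main obstacle: on a locally finite graph $W^{2,2}(V)$ is not compactly embedded into $\ell^p(V)$, so the convergence of the $L^p$-norms (i.e. no mass escaping to infinity) has to be extracted from the coercivity generated by $\lambda_k a$ via $(A_2)$ together with the uniform bound on $\int_V\lambda_k a\,u_{\lambda_k}^2\,d\mu$; everything else is routine Nehari-manifold and weak-convergence bookkeeping.
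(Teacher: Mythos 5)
Your argument is correct, and it follows the same overall skeleton as the paper's proof (uniform Nehari bounds giving $0<\nu\le\|u_{\lambda}\|_{E_{\lambda}}$ and $m_{\lambda}\le m_{\Omega}$, extraction of a weak limit $u_{0}$ vanishing off $\Omega$, passage to the limit in the Euler--Lagrange equation over the finite set $\Omega\cup\partial\Omega$, identification of the energy, and an upgrade to strong convergence), but your endgame is organized differently and more economically. The paper first proves $m_{\lambda}\to m_{\Omega}$ as a standalone lemma by a contradiction argument that rescales the weak limit by some $t\in(0,1]$ to land on $\mathcal{N}_{\Omega}$, and then runs a second contradiction of the same type to establish the identities $\lambda_{k}\int_{V}a|u_{\lambda_{k}}|^{2}d\mu\to 0$ and $\int_{V}(|\Delta u_{\lambda_{k}}|^{2}+|\nabla u_{\lambda_{k}}|^{2})d\mu\to\int_{V}(|\Delta u_{0}|^{2}+|\nabla u_{0}|^{2})d\mu$, from which $J_{\Omega}(u_{0})=m_{\Omega}$ follows. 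You instead put $u_{0}$ on $\mathcal{N}_{\Omega}$ first, by passing to the limit in $J'_{\lambda_{k}}(u_{\lambda_{k}})\phi=0$, and then the single sandwich $m_{\lambda_{k}}=\bigl(\tfrac{1}{2}-\tfrac{1}{p}\bigr)\int_{V}|u_{\lambda_{k}}|^{p}d\mu\to J_{\Omega}(u_{0})\ge m_{\Omega}\ge m_{\lambda_{k}}$ identifies the energy with no $t$-scaling at all, and yields $m_{\lambda}\to m_{\Omega}$ as a corollary rather than a prerequisite. Your other deviations are equivalent but slightly cleaner: $u_{0}|_{\Omega^{c}}=0$ via Fatou applied to $\int_{V}a|u_{\lambda_{k}}|^{2}d\mu\le C/\lambda_{k}$ (the paper uses a blow-up contradiction at a single vertex), and strong convergence via norm convergence plus weak convergence in the Hilbert space $W^{2,2}(V)$ (the paper expands $\|u_{\lambda_{k}}-u_{0}\|_{E_{\lambda_{k}}}^{2}$ directly). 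Your tightness step, using $(A_{2})$ and the uniform bound on $\int_{V}\lambda_{k}a|u_{\lambda_{k}}|^{2}d\mu$ to rule out mass escaping to infinity, correctly reproduces the content of the paper's embedding lemma for $E_{\lambda}$, which is indeed the crux. The only thing the paper's longer route buys explicitly is the stronger statement $\|u_{\lambda_{k}}-u_{0}\|_{E_{\lambda_{k}}}\to 0$, but that also drops out of your norm identity since the discarded nonnegative term $\lambda_{k}\int_{V}a|u_{\lambda_{k}}|^{2}d\mu$ is squeezed to zero.
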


As far as we know, there is no such results on forth order equations defined on locally finite graphs. Our works generalize the results in \cite{ZhangZhao} to higher order equations but the proofs are more complicated than those in \cite{ZhangZhao}. Furthermore, our equations and proofs on a graph are not the same as those in the Euclidean space. For example, since a graph is a discrete object, there is no derivative on the boundary of a bounded domain in $G$ and the boundary condition of \eqref{dirichlet} is different from those in the Euclidean space. Usually in the Euclidean case, the existence of solutions for the equation \eqref{dirichlet} is proved by minimizing the corresponding functional or by the Mountain Pass theorem. Instead of these methods, we get the existence of a ground state solution for the equation \eqref{dirichlet} more directly by proving convergence of $u_\lambda$ in $W^{2,2}(V)$.

This paper is organized as follows. In Sect. 2, we establish several necessary tools for calculus of variation on graphs, including integration by parts and properties of Sobolev spaces, especially the embedding theorems. In Sect. 3, based on the above works, we prove the existence of a ground state solution of \eqref{equation} by using the Nehari method. Finally in Sect. 4, we demonstrate the desired convergence behavior, namely as $\lambda\rightarrow +\infty$, the ground state solutions $u_{\lambda}$ of \eqref{equation} tend to $0$ outside $\Omega$ and to a ground state solution of \eqref{dirichlet} in $\Omega$. These prove Theorem \ref{existence2} and \ref{convergence}. Throughout this paper, we always assume conditions $(G_1)-(G_4)$ and $(A_1)-(A_2)$ unless otherwise stated.

\section{Preliminaries and functional settings}

In this section, we introduce some preliminaries and basic functional settings. In particular, we shall prove formulas of integration by parts and embeddings of Sobolev spaces on graphs.

Since the discreteness of graphs, the functional spaces on graphs are different from those on the Euclidean space. We first present several properties of the Sobolev spaces used in this paper.

\begin{proposition}\label{w12}
$W^{1,2}(V)$ is the completion of $C_{c}(V)$ under the norm
$\|u\|^2_{W^{1,2}(V)}=\int_{V}(|\nabla u|^{2}+u^{2})d\mu$.
\end{proposition}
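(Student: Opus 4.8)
The plan is to identify the completion of $C_{c}(V)$ under $\|\cdot\|_{W^{1,2}(V)}$ with the concrete function space $\widetilde W:=\{u:V\to\mathbb R:\ \int_V(|\nabla u|^2+u^2)\,d\mu<+\infty\}$, so that the proposition reduces to the density of $C_{c}(V)$ in $\widetilde W$. First I would record that $\widetilde W$ is complete for this norm: if $(u_n)$ is Cauchy, then since $\mu(x)\ge\mu_{\min}>0$ by $(G_3)$ the sequence $(u_n(x))$ is Cauchy in $\mathbb R$ for each fixed $x$, hence converges to some $u(x)$; local finiteness $(G_1)$ makes $\nabla$ a finite sum at every vertex, so $|\nabla(u_n-u_m)|(x)\to|\nabla(u-u_m)|(x)$ pointwise, and Fatou's lemma gives $\|u-u_m\|_{W^{1,2}(V)}\le\liminf_n\|u_n-u_m\|_{W^{1,2}(V)}$, which is small for large $m$; hence $u\in\widetilde W$ and $u_n\to u$. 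Since $C_{c}(V)\subset\widetilde W$ isometrically, the completion of $C_{c}(V)$ is exactly the closure of $C_{c}(V)$ in $\widetilde W$, and it remains to show this closure is all of $\widetilde W$.

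For the density, fix a base vertex $x_0$ and put $B_n:=\{x\in V:\ d(x_0,x)\le n\}$, a finite set by $(G_1)$, and define the truncations $u_n:=u\cdot\chi_{B_n}\in C_{c}(V)$. The computation I would use repeatedly is the identity $\int_V|\nabla w|^2\,d\mu=\sum_{\{x,y\}\in E}\omega_{xy}(w(x)-w(y))^2$, obtained by cancelling $\mu(x)$ against the $\tfrac1{\mu(x)}$ in $|\nabla w|^2(x)$ and using $\omega_{xy}=\omega_{yx}$ from $(G_4)$. The zeroth-order term is immediate: $\int_V|u-u_n|^2\,d\mu=\sum_{x\notin B_n}\mu(x)u(x)^2$ is a tail of the convergent series $\int_Vu^2\,d\mu$ and hence tends to $0$.

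The substance of the argument is the gradient term. Writing $v_n:=u-u_n=u\cdot\chi_{V\setminus B_n}$ we get $\int_V|\nabla v_n|^2\,d\mu=\sum_{\{x,y\}\in E}\omega_{xy}(v_n(x)-v_n(y))^2$, and an edge contributes only if at least one endpoint lies outside $B_n$. Such edges split into two families. If both endpoints lie outside $B_n$, the summand equals $\omega_{xy}(u(x)-u(y))^2$; since every fixed edge eventually has both endpoints inside $B_n$, these summands are dominated by the summable family $\omega_{xy}(u(x)-u(y))^2$ (whose total is $\int_V|\nabla u|^2\,d\mu<+\infty$) and so tend to $0$ by dominated convergence for series. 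If $\{x,y\}$ is a crossing edge, say $x\notin B_n$ and $y\in B_n$, the summand is $\omega_{xy}u(x)^2$, and since the outside endpoint then lies in $B_{n+1}\setminus B_n$, summing over all crossing edges gives at most $\sum_{x\in B_{n+1}\setminus B_n}u(x)^2\sum_{y\sim x}\omega_{xy}\le C\sum_{x\in B_{n+1}\setminus B_n}u(x)^2\le\frac{C}{\mu_{\min}}\sum_{x\in B_{n+1}\setminus B_n}\mu(x)u(x)^2$, using $\sum_{y\sim x}\omega_{xy}<C$ from $(G_4)$ and $\mu\ge\mu_{\min}$ from $(G_3)$; this is again a tail of $\int_Vu^2\,d\mu$ and tends to $0$. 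Combining the three estimates gives $\|u_n-u\|_{W^{1,2}(V)}\to 0$, which proves density, hence the proposition.

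The only genuinely delicate point is the treatment of the crossing edges: it is precisely there that the structural hypotheses $(G_3)$ (uniform lower bound on the measure) and $(G_4)$ (uniformly bounded weighted degree) are needed to convert a sum of edge quantities into a tail of the convergent vertex series $\int_Vu^2\,d\mu$. Without such assumptions this truncation scheme can fail, consistent with the fact that on a general domain the completion of the compactly supported functions is only a proper subspace of the finite-norm space.
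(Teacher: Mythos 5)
Your proof is correct, but it takes a genuinely different route from the paper's. The paper uses a piecewise-linear cutoff $\eta_k$ (equal to $1$ on the ball of radius $k$, decaying linearly to $0$ on the annulus $k<d_x<2k$), sets $u_k=u\eta_k$, and splits the gradient integral by vertex location into three pieces $I_k+II_k+III_k$; the annulus term then requires a discrete product-rule expansion and the bound $(1-\frac{d_y}{k})^2\leq C$, with the ramp contributing a helpful $\frac{1}{k^2}$ factor on the zeroth-order part. You instead use the sharp truncation $u_n=u\chi_{B_n}$ and reorganize the gradient energy as a sum over unordered edges, so that everything reduces to tails of the two convergent series $\int_V u^2d\mu$ and $\int_V|\nabla u|^2d\mu$: edges with both endpoints outside $B_n$ are a tail of the edge series for $\int_V|\nabla u|^2d\mu$, and crossing edges are controlled by $\sum_{y\sim x}\omega_{xy}<C$ and $\mu\geq\mu_{\min}$, exactly the same places where the paper invokes $(G_3)$--$(G_4)$. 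Your version is shorter and avoids the ramp bookkeeping (on a locally finite graph the indicator is an admissible cutoff, unlike in the continuum, which is the only reason the linear ramp is traditional); you also supply the completeness of the finite-norm space, which the paper leaves implicit when it reduces the proposition to a density statement. Both arguments are sound; yours buys economy, the paper's buys a template that transfers verbatim to settings where sharp cutoffs are forbidden.
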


\begin{proof}
We only need to prove that for any $u\in W^{1,2}(V)$, there exist $u_{k}\in C_{c}(V)$ such that $\|u_{k}-u\|_{W^{1,2}(V)}\rightarrow 0$ as $k\rightarrow\infty$.

Fix a base point $x_0\in V$ and define $\eta_{k}: V\rightarrow\mathbb{R}$ as
$$\eta_{k}(x)=\left\{\aligned &1, & d_x\leq k,\\
&\frac{2k-d_x}{k}, &k<d_x<2k,\\
&0,&d_x\geq 2k,\endaligned\right.$$
where $d_x$ denotes the distance between $x$ and $x_0$. Obviously, $\{\eta_{k}\}$ is a nondecreasing sequence of finitely supported functions which satisfies $0\leq \eta_{k}\leq 1$ and $\underset{k\rightarrow\infty}\lim\eta_{k}=1$.

Let $u_{k}=u\eta_{k}\in C_{c}(V)$. It suffices to show that
$$\|u_{k}-u\|^{2}_{W^{1,2}(V)}=\int_{V}|\nabla (u_{k}-u)|^{2}+|u_{k}-u|^{2}d\mu\rightarrow 0\ \ \hbox{as}\ \ k\rightarrow\infty.$$

Since $\int_{V}|u|^{2}d\mu<+\infty$ and $|\frac{k-d_x}{k}|<1$ for any $x\in\{x\in V, k<d_x<2k\}$, we have
\begin{eqnarray*}
\int_{V}|u_{k}-u|^{2}d\mu
&=&\underset{x\in V}\sum|u_{k}-u|^{2}(x)\mu(x)\\
&=&\underset{x\in V, d_x\leq k}\sum |u_{k}-u|^{2}(x)\mu(x)+\underset{x\in V, k<d_x<2k}\sum |u_{k}-u|^{2}(x)\mu(x)\\
&& +\underset{x\in V, d_x\geq 2k}\sum |u_{k}-u|^{2}(x)\mu(x)\\
&=&\underset{x\in V, k<d_x<2k}\sum u(x)^{2}(\frac{k-d_x}{k})^{2}\mu(x)+\underset{x\in V,d_x\geq2k}\sum u(x)^{2}\mu(x)\\
&\leq&\underset{x\in V,k<d_x<2k}\sum u(x)^{2}\mu(x)+\underset{x\in V,d_x\geq2k}\sum u(x)^{2}\mu(x)\\
&=&\underset{x\in V,d_x>k}\sum u(x)^{2}\mu(x)\\
&\rightarrow& 0 \ \ \hbox{as} \ \ k\rightarrow\infty.
\end{eqnarray*}

Next we need to prove that $\int_{V}|\nabla (u_{k}-u)|^{2}d\mu\rightarrow 0$. We have

\begin{eqnarray*}
\int_{V}|\nabla (u_{k}-u)|^{2}d\mu
&=&\underset{x\in V}\sum|\nabla (u_{k}-u)|^{2}(x)\mu(x)\\
&=&\underset{x\in V, d_x\leq k}\sum |\nabla (u_{k}-u)|^{2}(x)\mu(x)+\underset{x\in V, k< d_x< 2k}\sum |\nabla (u_{k}-u)|^{2}(x)\mu(x)\\
&&+\underset{x\in V, d_x\geq 2k}\sum |\nabla (u_{k}-u)|^{2}(x)\mu(x)\\
&=&I_{k}+II_{k}+III_{k}.
\end{eqnarray*}

By the construction of $\eta_k$ and the definition of the gradient operator, we know that
\begin{eqnarray*}
I_k&=&\sum_{x\in V, d_x\leq k}\frac{1}{2\mu(x)}\sum_{y\sim x}\omega_{xy}((u_k-u)(y)-(u_k-u)(x))^2\\
&=&\sum_{d_x=k}\frac{1}{2\mu(x)}\sum_{y\sim x; d_y=k+1}\omega_{xy}((u_k-u)(y))^2\\
&=&\frac{1}{k^2}\sum_{d_x=k}\frac{1}{2\mu(x)}\sum_{y\sim x; d_y=k+1}\omega_{xy}u^2(y)\\
&\leq& \frac{1}{k^2\mu^2_{\min}}\sum_{ d_y=k+1}u^2(y)\mu(y)\sum_{x\sim y; d_x=k}\omega_{xy}\\
&\leq &\frac{C}{k^2\mu^2_{\min}}\sum_{ d_y=k+1}u^2(y)\mu(y),
\end{eqnarray*} which tends to zero as $k\rightarrow 0$ since $u\in W^{1,2}(V)$.

\begin{eqnarray*}
III_k&=&\sum_{x\in V, d_x\geq 2k}\frac{1}{2\mu(x)}\sum_{y\sim x}\omega_{xy}((u_k-u)(y)-(u_k-u)(x))^2\\
&=&\sum_{d_x=2k}\frac{1}{2\mu(x)}\sum_{y\sim x; d_y=2k-1}\omega_{xy}((u_k-u)(y)+u(x))^2\\
&&+\sum_{d_x>2k}\frac{1}{2\mu(x)}\sum_{y\sim x }\omega_{xy}(u(y)-u(x))^2\\
&\leq&\frac{1}{k^2} \sum_{d_x=2k}\frac{1}{2\mu(x)}\sum_{y\sim x; d_y=2k-1}\omega_{xy}u^2(y)\\
&&+\sum_{d_x> 2k}\frac{1}{2\mu(x)}\sum_{y\sim x }\omega_{xy}(u(y)-u(x))^2\\
&\leq&  \frac{1}{k^2\mu^2_{\min}}\sum_{ d_y=2k-1}u^2(y)\mu(y)\sum_{x\sim y; d_x=2k}\omega_{xy}\\
&&+\sum_{d_x> 2k}|\nabla u|^2(x)\\
&\leq&\frac{C}{k^2\mu^2_{\min}}\sum_{ d_y=2k-1}u^2(y)\mu(y)+\frac{1}{\mu_{\min}}\sum_{d_x\geq 2k}|\nabla u|^2(x)\mu(x),
\end{eqnarray*} which also tends to zero as $k\rightarrow 0$ since $u\in W^{1,2}(V)$.

Now we begin to deal with $II_{k}$. Let $B=\{x\in V:k< d_x<2k\}$. We have

\begin{eqnarray}\label{ik}
II_{k}
&=&\frac{1}{2}\underset{x\in B }\sum\underset{y\sim x}\sum w_{xy}\left(u(y)\eta_k(y)-u(y)-u(x)\eta_k(x)+u(x)\right)^{2}\nonumber\\
&=&\frac{1}{2}\underset{x\in B }\sum\underset{y\sim x}\sum w_{xy}\left(u(y)-u(y)\frac{d_y}{k}+u(x)\frac{d_y}{k}-u(x)\frac{d_y}{k}
-u(x)+u(x)\frac{d_x}{k}\right)^{2}\nonumber\\
&=&\frac{1}{2}\underset{x\in B }\sum\underset{y\sim x}\sum w_{xy}\left((u(y)-u(x))(1-\frac{d_y}{k})
+u(x)(\frac{d_x}{k}-\frac{d_y}{k})\right)^{2}\nonumber\\
&\leq& \underset{x\in B }\sum\underset{y\sim x}\sum w_{xy}\left((u(y)-u(x))^{2}(1-\frac{d_y}{k})^{2}
+u(x)^{2}(\frac{d_x}{k}-\frac{d_y}{k})^{2}\right)\nonumber\\
&=&\underset{x\in B }\sum\underset{y\sim x}\sum w_{xy}(u(y)-u(x))^{2}(1-\frac{d_y}{k})^{2}+\underset{x\in B }\sum\underset{y\sim x}\sum w_{xy}u(x)^{2}(\frac{d_x}{k}-\frac{d_y}{k})^{2}\nonumber\\
&=&\underset{x\in B }\sum\underset{y\sim x}\sum w_{xy}(u(y)-u(x))^{2}(1-\frac{d_y}{k})^{2}+\underset{x\in B }\sum\underset{y\sim x}\sum w_{xy}u(x)^{2}\frac{1}{k^{2}}\nonumber\\
&=&2\underset{x\in B }\sum\frac{1}{2\mu(x)}\underset{y\sim x}\sum w_{xy}(u(y)-u(x))^{2}(1-\frac{d_y}{k})^{2}\mu(x)+\frac{1}{k^{2}}\underset{x\in B }\sum\underset{y\sim x}\sum w_{xy}u(x)^{2}\mu(x)\frac{1}{\mu(x)}\nonumber\\
&\leq& 2 C\underset{x\in B }\sum |\nabla u|^{2}(x)\mu(x)+\frac{1}{k^{2}}\frac{1}{\mu_{\min}}\underset{x\in B }\sum u(x)^{2}\mu(x)\cdot\underset{y\sim x}\sum w_{xy}.
\end{eqnarray}
The first inequality in \eqref{ik} is because of the fact that for any $a,b\in\mathbb{R}$, $(a+b)^{2}\leq 2(a^{2}+b^{2})$. In the last inequality of \eqref{ik}, we use $(1-\frac{d_y}{k})^{2}\leq C$ for some constant independent of $k$. Since $\int_{B}|\nabla u|^{2}d\mu \rightarrow 0$ and $\int_{B}u^{2}d\mu\rightarrow 0$ as $k\rightarrow\infty$, we get from the inequality \eqref{ik} and $(G_4)$ that

$$\lim_{k\rightarrow \infty}I_{k}=0.$$

Then the lemma is proved.
\end{proof}

\begin{remark}
In \cite{HuaLin,HuangKeller}, there are results similar to Proposition \ref{w12} under different assumptions on graphs.
\end{remark}

For the function space with second order derivative, we also have

\begin{proposition}\label{w22}
$W^{2,2}(V)$ is the completion of $C_{c}(V)$ under the norm
$\|u\|^2_{W^{2,2}(V)}=\int_{V}(|\Delta u|^2+|\nabla u|^{2}+u^{2})d\mu$.
\end{proposition}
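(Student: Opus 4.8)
The plan is to imitate the proof of Proposition~\ref{w12}. As there, it suffices to prove that every $u$ with $\|u\|_{W^{2,2}(V)}<\infty$ is the $W^{2,2}(V)$-limit of functions in $C_c(V)$ (completeness of this concrete space being routine and addressed at the end). Fix the base point $x_0$, write $d_x=d(x,x_0)$, and reuse the cut-offs $\eta_k$ from Proposition~\ref{w12}; set $\zeta_k:=1-\eta_k$, so that $0\le\zeta_k\le1$, $\zeta_k\equiv0$ on $\{d_x\le k\}$, $\zeta_k\equiv1$ on $\{d_x\ge 2k\}$, and $|\zeta_k(y)-\zeta_k(x)|\le\frac1k$ whenever $x\sim y$ (because $|d_x-d_y|\le1$). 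Put $u_k:=u\eta_k\in C_c(V)$, so that $u_k-u=-u\zeta_k$. Since the $W^{2,2}(V)$-norm dominates the $W^{1,2}(V)$-norm, Proposition~\ref{w12} already gives $\int_V|u_k-u|^2\,d\mu\to0$ and $\int_V|\nabla(u_k-u)|^2\,d\mu\to0$, so the whole task reduces to showing $\int_V|\Delta(u\zeta_k)|^2\,d\mu\to0$.

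For this I would use the discrete Leibniz rule
$$\Delta(u\zeta_k)(x)=\zeta_k(x)\,\Delta u(x)+u(x)\,\Delta\zeta_k(x)+2\,\Gamma(u,\zeta_k)(x),$$
which is immediate from the definitions of $\Delta$ and $\Gamma$, and then bound the three resulting $L^2$-integrals separately via $(a+b+c)^2\le3(a^2+b^2+c^2)$. The first is handled by $\int_V\zeta_k^2|\Delta u|^2\,d\mu\le\sum_{d_x>k}|\Delta u(x)|^2\mu(x)\to0$, since $\Delta u\in L^2(V)$. For the second, $(G_4)$ gives $|\Delta\zeta_k(x)|\le\frac1{\mu(x)}\sum_{y\sim x}\omega_{xy}|\zeta_k(y)-\zeta_k(x)|\le\frac{C}{k\,\mu_{\min}}$, while $\Delta\zeta_k(x)\ne0$ forces $\zeta_k$ to be non-constant on $\{x\}\cup\{y:y\sim x\}$, i.e. $k\le d_x\le 2k$; hence $\int_V u^2|\Delta\zeta_k|^2\,d\mu\le\frac{C^2}{k^2\mu_{\min}^2}\sum_{k\le d_x\le 2k}u(x)^2\mu(x)\to0$ since $u\in L^2(V)$. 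For the third, Cauchy--Schwarz on the inner sum combined with $(G_4)$ yields $|\Gamma(u,\zeta_k)(x)|\le\frac{\sqrt{2C}}{2k\sqrt{\mu_{\min}}}\,|\nabla u|(x)$, again supported in $\{k\le d_x\le 2k\}$, so $\int_V|\Gamma(u,\zeta_k)|^2\,d\mu\le\frac{C}{2k^2\mu_{\min}}\sum_{k\le d_x\le 2k}|\nabla u|^2(x)\mu(x)\to0$ because $|\nabla u|\in L^2(V)$. Adding the three bounds gives $\int_V|\Delta(u\zeta_k)|^2\,d\mu\to0$, which proves density of $C_c(V)$.

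The main obstacle is the mixed term $\Gamma(u,\zeta_k)$: unlike the pure $\Delta u$ and $\Delta\zeta_k$ pieces it couples the first-order behaviour of $u$ with the slope of the cut-off, and controlling it requires both the uniform weight bound $(G_4)$ and the localization of $\Delta\zeta_k$ (equivalently of $\Gamma(\cdot,\zeta_k)$) to the annulus $\{k\le d_x\le 2k\}$, so that the vanishing of the $L^2$-tail of $|\nabla u|$ can be invoked; the Leibniz identity and the precise support of $\Delta\zeta_k$ are elementary but must be recorded carefully. Finally, to conclude that this concrete space is indeed the completion of $C_c(V)$, one notes it is complete: the estimate $\mu_{\min}|v(x)|^2\le\int_V v^2\,d\mu\le\|v\|_{W^{2,2}(V)}^2$ forces a $W^{2,2}(V)$-Cauchy sequence to converge pointwise, whence $\Delta v_n(x),|\nabla v_n|(x),v_n(x)$ converge pointwise, and Fatou's lemma applied to the three quadratic terms shows the pointwise limit lies in $W^{2,2}(V)$ and is the norm limit. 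Together with the density established above, this proves the proposition.
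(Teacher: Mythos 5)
Your proof is correct and reaches the conclusion by the same overall strategy as the paper (truncate with the cut-offs $\eta_k$ of Proposition \ref{w12} and reduce everything to showing $\int_V|\Delta(u\eta_k-u)|^2\,d\mu\to0$), but the key estimate is organized in a genuinely different way. The paper splits $V$ into the three regions $\{d_x\le k\}$, $\{k<d_x<2k\}$, $\{d_x\ge2k\}$ and, in the middle annulus, expands $\Delta(u\eta_k-u)(x)$ by hand, regrouping the summand as $(u(y)-u(x))(1-\tfrac{d_y}{k})+u(x)\tfrac{d_x-d_y}{k}$ before applying $(a+b)^2\le2(a^2+b^2)$ and Cauchy--Schwarz on the sum over $y\sim x$; the outer regions are then treated as in Proposition \ref{w12}. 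You instead prove the global discrete Leibniz identity $\Delta(u\zeta_k)=\zeta_k\Delta u+u\Delta\zeta_k+2\Gamma(u,\zeta_k)$ (which is indeed immediate from the definitions) and control the three pieces on all of $V$ at once, using the $\tfrac1k$-Lipschitz bound on $\zeta_k$, the localization of $\Delta\zeta_k$ and of $\Gamma(\cdot,\zeta_k)$ to the annulus $\{k\le d_x\le2k\}$, and the $L^2$ tail-decay of $|\Delta u|$, $|\nabla u|$ and $u$. I checked the constants: $|\Delta\zeta_k|\le\frac{C}{k\mu_{\min}}$, $|\Gamma(u,\zeta_k)(x)|\le\frac{\sqrt{2C}}{2k\sqrt{\mu_{\min}}}|\nabla u|(x)$, and the support claims are all correct, so there is no gap. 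Your organization buys a cleaner, case-free argument in which the mixed term $\Gamma(u,\zeta_k)$ absorbs exactly the cross terms that the paper handles region by region, and you additionally record the completeness of the concrete finite-norm space, a point the paper leaves implicit; the paper's version, by contrast, avoids stating any product rule and stays entirely within elementary summation estimates.
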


\begin{proof}
Following the symbols used in the proof of Proposition \ref{w12}, we only need to prove that
\begin{equation*}\label{2order}
\lim_{k\rightarrow 0}\int_{V}|\Delta(u_k-u)|^2 d\mu= 0.
\end{equation*}

By directly computations, we have

\begin{eqnarray*}
\int_{V}|\Delta(u_k-u)|^2 d\mu
&=&\underset{x\in V, d_x\leq k }\sum |\Delta(u\eta_k-u)|^2(x)\mu(x)
+\underset{x\in V, k< d_x< 2k }\sum |\Delta(u\eta_k-u)|^2(x)\mu(x)\\
&&+\underset{x\in V, d_x\geq 2k }\sum |\Delta(u\eta_k-u)|^2(x)\mu(x)\\
&=&I_k+II_k+III_k.
\end{eqnarray*}

For the second term $II_k$, we have

\begin{eqnarray}\label{ik2}
II_{k}
&=&\underset{x\in B }\sum \mu(x)\left(\frac{1}{\mu(x)}\underset{y\sim x}\sum w_{xy}(u(y)\eta_k(y)-u(y)-u(x)\eta_k(x)+u(x))\right)^{2}\nonumber\\
&=&\underset{x\in B }\sum \frac{1}{\mu(x)}\left(\underset{y\sim x}\sum w_{xy}\left(u(y)-u(y)\frac{d_y}{k}+u(x)\frac{d_y}{k}-u(x)\frac{d_y}{k}
-u(x)+u(x)\frac{d_x}{k}\right)\right)^{2}\nonumber\\
&=&\underset{x\in B }\sum \frac{1}{\mu(x)}\left(\underset{y\sim x}\sum w_{xy}((u(y)-u(x))(1-\frac{d_y}{k})
+u(x)(\frac{d_x}{k}-\frac{d_y}{k}))\right)^{2}\nonumber\\
&\leq& 2\underset{x\in B }\sum \frac{1}{\mu(x)}\left(
(\underset{y\sim x}\sum w_{xy}(u(y)-u(x))(1-\frac{d_y}{k}))^{2}
+(\underset{y\sim x}\sum w_{xy}u(x)(\frac{d_x}{k}-\frac{d_y}{k}))^{2}\right)\nonumber\\
&\leq& 2\underset{x\in B}\sum \frac{1}{\mu(x)}\left(
\underset{y\sim x}\sum w_{xy}(u(y)-u(x))^2
\underset{y\sim x}\sum w_{xy}(1-\frac{d_y}{k})^2
+\underset{y\sim x}\sum w_{xy}u^2(x)
\underset{y\sim x}\sum w_{xy}(\frac{d_x}{k}-\frac{d_y}{k})^2\right)\nonumber\\
&=&4\underset{x\in B }\sum \frac{1}{2\mu(x)}
\underset{y\sim x}\sum w_{xy}(u(y)-u(x))^2
\underset{y\sim x}\sum w_{xy}(1-\frac{d_y}{k})^2
+2\underset{x\in B }\sum\frac{1}{\mu(x)}\underset{y\sim x}\sum w_{xy}u^2(x)
\underset{y\sim x}\sum w_{xy}\frac{1}{k^2}\nonumber\\
&=&4\underset{x\in B }\sum |\nabla u|^2(x)\underset{y\sim x}\sum w_{xy}(1-\frac{d_y}{k})^2
+\frac{2}{k^2}\underset{x\in B }\sum\frac{1}{\mu(x)}u^2(x)(\underset{y\sim x}\sum w_{xy})^2\nonumber\\
&\leq&\frac{4C}{\mu_{\min}}\underset{x\in B }\sum |\nabla u|^2(x)\mu(x)
+\frac{2C^2}{k^2\mu^2_{\min}}\underset{x\in B }\sum u^2(x)\mu(x)\nonumber\\
&=&\frac{4C}{\mu_{\min}}\int_B |\nabla u|^2d\mu
+\frac{2C^2}{\mu^2_{\min}k^2}\int_B |u|^2d\mu.
\end{eqnarray}

In the first and the second inequalities of \eqref{ik2}, we use the facts that $(a+b)^2\leq 2(a^2+b^2)$ and $(\sum_i a_i b_i)^2\leq \sum_i a_i^2\cdot \sum_i b_i^2 $, where $a,b, a_i, b_i\in \mathbb{R}$. Since $|1-\frac{d_y}{k}|<1$ and $(G_3)-(G_4)$, we get the last inequality of \eqref{ik2}. By $u\in W^{2,2}(V)$, we get $\underset{k\rightarrow \infty}\lim II_k=0$.

The computations for the first and third terms are similar to those in Proposition \ref{w12} and for brevity, we omit them here.
\end{proof}

\begin{remark}
Let $\Omega$ be a connected and bounded domain in $V$. For the space $W^{1,2}_{0}(\Omega)$, which is the completion of $C_{c}(\Omega)$ under the norm $\|u\|^2_{W^{1,2}_{0}(\Omega)}=\int_{\Omega\cup\partial\Omega}|\nabla u|^{2}d\mu+\int_{\Omega}|u|^{2}d\mu$, we have that it is just $C_c(\Omega)$, namely, $W^{1,2}_0(\Omega)=C_c(\Omega)$. Furthermore, for any integer $m>2$, we also have $W_0^{m,2}(\Omega)=C_c^{m-1}(\Omega)$, where $C^{m-1}_{c}(\Omega):=\{u:\Omega\rightarrow\mathbb{R}:u(x)=|\nabla u|=\cdots=|\nabla^{m-1}u|=0 \ \ \hbox{on}\ \ \partial\Omega\}$. Since a bounded domain $\Omega\subset V$ just contains finite vertices, the proofs are easy and we leave them to the readers.
\end{remark}

Next, we turn to formulas of integration by parts on graphs, which are fundamental when we use methods in calculus of variations. The proofs of the next two lemmas can be found in \cite{ZhangZhao} and we omit them here.

\begin{lemma}\label{a}
Suppose that $u\in W^{1,2}(V)$. Then for any $v\in C_{c}(V)$, we have
\begin{equation*}\label{partw12v}
\int_{V}\nabla u\nabla vd\mu=\int_{V}\Gamma(u,v)d\mu=-\int_{V}(\Delta u)vd\mu.
\end{equation*}
\end{lemma}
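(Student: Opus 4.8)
The plan is to expand the gradient form $\Gamma(u,v)$, reduce the resulting sums to finite ones using that $v$ has finite support and that $G$ is locally finite, and then rearrange using the symmetry $\omega_{xy}=\omega_{yx}$ from $(G_4)$.

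First I would record that every $u\in W^{1,2}(V)$ is a genuine function on $V$ with $\Delta u$ defined pointwise: since $\mu(x)\ge\mu_{\min}$, the bound $\mu_{\min}\,u(x)^2\le\int_V u^2\,d\mu<\infty$ gives $u\in\ell^{\infty}(V)$, and by $(G_1)$ the sum $\sum_{y\sim x}\omega_{xy}(u(y)-u(x))$ defining $\mu(x)\Delta u(x)$ is finite for each $x$. Next, set $K=\{x\in V:\ v(x)\neq0\}$, which is finite because $v\in C_{c}(V)$, and let $N(K)=\{x\in V:\ \exists\,y\in K\ \text{with}\ y\sim x\}$, which is finite by $(G_1)$. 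The term $\omega_{xy}(u(y)-u(x))(v(y)-v(x))$ vanishes whenever both $x\notin K$ and $y\notin K$; consequently, in
$$\int_V\Gamma(u,v)\,d\mu=\frac12\sum_{x\in V}\sum_{y\sim x}\omega_{xy}(u(y)-u(x))(v(y)-v(x)),$$
only pairs $(x,y)$ with $x\in K$ or $y\in K$ contribute, and by $(G_1)$ there are only finitely many such pairs. Hence this double sum is in fact finite (so in particular $\int_V\Gamma(u,v)\,d\mu$ is well defined), and the rearrangements below are legitimate.

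Then I would write $(u(y)-u(x))(v(y)-v(x))=(u(y)-u(x))v(y)-(u(y)-u(x))v(x)$, so that
$$\int_V\Gamma(u,v)\,d\mu=\frac12\sum_{x}\sum_{y\sim x}\omega_{xy}(u(y)-u(x))v(y)-\frac12\sum_{x}\sum_{y\sim x}\omega_{xy}(u(y)-u(x))v(x).$$
In the first (now finite) double sum I relabel the ordered pair $(x,y)\mapsto(y,x)$ and use $\omega_{yx}=\omega_{xy}$; this turns it into $-\frac12\sum_{x}\sum_{y\sim x}\omega_{xy}(u(y)-u(x))v(x)$, i.e. it equals the second double sum up to sign. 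Therefore
$$\int_V\Gamma(u,v)\,d\mu=-\sum_{x}v(x)\sum_{y\sim x}\omega_{xy}(u(y)-u(x))=-\sum_{x}v(x)\,\mu(x)\,\Delta u(x)=-\int_V(\Delta u)v\,d\mu,$$
which is the claim, the identity $\int_V\nabla u\nabla v\,d\mu=\int_V\Gamma(u,v)\,d\mu$ being merely notation.

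The only delicate point is the passage through infinite sums, but it disappears here: $v\in C_{c}(V)$ together with $(G_1)$ collapses every sum in sight to a finite one, so nothing beyond finite additivity and the symmetry of the weights is needed. If one wanted the same formula for a general $v\in W^{1,2}(V)$, one would instead approximate $v$ by $C_{c}(V)$ functions via Proposition \ref{w12} and control the gradient terms by Cauchy--Schwarz, but that refinement is not required for the present statement.
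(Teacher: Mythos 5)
Your proof is correct. The paper itself omits the argument for this lemma, referring instead to the reference [ZhangZhao], but the summation-by-parts you carry out --- expanding $\int_V\Gamma(u,v)\,d\mu$ into a double sum, observing that the finite support of $v$ together with $(G_1)$ leaves only finitely many nonzero terms, splitting the product and swapping the roles of $x$ and $y$ in one half via $\omega_{xy}=\omega_{yx}$ --- is exactly the technique the paper uses for the higher-order analogue in Lemma \ref{c}, and your explicit justification that all sums are finite (so the relabelling needs no absolute-convergence argument) is the one genuinely delicate point, handled correctly.
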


\begin{lemma}\label{b}
Suppose that $u\in W^{1,2}(V)$ and $\Omega\subset V$ is a bounded connected domain. Then for any $v\in C_{c}(\Omega)$, we have
\begin{equation*}\label{partw12omega}
\int_{\Omega\cup\partial\Omega}\nabla u\nabla vd\mu=\int_{\Omega\cup\partial\Omega}\Gamma(u,v)d\mu=-\int_{\Omega}(\Delta u)vd\mu.
\end{equation*}
\end{lemma}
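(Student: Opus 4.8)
The first equality in the statement is purely notational, since throughout the paper $\nabla u\nabla v$ is just another name for $\Gamma(u,v)$; so the real content is the localized Green identity $\int_{\Omega\cup\partial\Omega}\Gamma(u,v)\,d\mu=-\int_{\Omega}(\Delta u)v\,d\mu$. My plan is to deduce this from the global integration-by-parts formula already recorded in Lemma \ref{a}, rather than redo the summation by parts from scratch. The first step is to extend $v$ by zero: since $v\in C_{c}(\Omega)$ vanishes on $\partial\Omega$ and everywhere outside $\Omega$, and since the bounded domain $\Omega$ contains only finitely many vertices, this extension (still denoted $v$) lies in $C_{c}(V)$. Lemma \ref{a} then gives at once $\int_{V}\Gamma(u,v)\,d\mu=-\int_{V}(\Delta u)v\,d\mu$, and it remains only to identify each side of this global identity with the corresponding localized integral.

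The right-hand side is immediate. Writing $\int_{V}(\Delta u)v\,d\mu=\sum_{x\in V}\mu(x)\,\Delta u(x)\,v(x)$ and using that $v(x)=0$ for every $x\notin\Omega$, the sum collapses to $\sum_{x\in\Omega}\mu(x)\,\Delta u(x)\,v(x)=\int_{\Omega}(\Delta u)v\,d\mu$, so $-\int_{V}(\Delta u)v\,d\mu=-\int_{\Omega}(\Delta u)v\,d\mu$ as required.

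The key step is the left-hand side, where I would show $\Gamma(u,v)(x)=0$ for every $x\in V\setminus(\Omega\cup\partial\Omega)$, which immediately gives $\int_{V}\Gamma(u,v)\,d\mu=\int_{\Omega\cup\partial\Omega}\Gamma(u,v)\,d\mu$. Fix such an $x$. Then $v(x)=0$ because $x\notin\Omega$, and moreover no neighbour $y\sim x$ can lie in $\Omega$: if some $y\sim x$ belonged to $\Omega$, then $x$, being outside $\Omega$ yet adjacent to a vertex of $\Omega$, would by the very definition of $\partial\Omega$ belong to $\partial\Omega$, contradicting $x\notin\partial\Omega$. Hence $v(y)=0$ for all $y\sim x$ as well, so every factor $v(y)-v(x)$ in $\Gamma(u,v)(x)=\tfrac{1}{2\mu(x)}\sum_{y\sim x}\omega_{xy}(u(y)-u(x))(v(y)-v(x))$ vanishes. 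Combining the two identifications with Lemma \ref{a} finishes the proof.

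The only point that needs genuine care is precisely this localization of the gradient integral, and it rests entirely on the definition of $\partial\Omega$; this is also exactly where a direct proof would run into trouble. If instead I expanded $\Gamma(u,v)$ over $\Omega\cup\partial\Omega$ and tried to interchange the roles of $x$ and $y$ using the symmetry $\omega_{xy}=\omega_{yx}$, the outer index $x$ would range only over $\Omega\cup\partial\Omega$ while the inner neighbour $y$ ranges over all of $V$, so the two summation sets would not coincide and I would have to reinsert and cancel the missing boundary edges by hand. Routing the argument through the already symmetric global formula of Lemma \ref{a} sidesteps that bookkeeping completely. Finally, since $\Omega\cup\partial\Omega$ is finite and $u\in W^{1,2}(V)$ while $v$ has finite support, every sum appearing above converges absolutely, so all the rearrangements are legitimate.
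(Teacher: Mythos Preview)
Your proof is correct. The paper itself does not supply an argument for this lemma; it simply remarks that the proofs of Lemmas \ref{a} and \ref{b} can be found in \cite{ZhangZhao} and omits them, so there is nothing to compare against. Your route---extending $v$ by zero to $C_c(V)$, invoking the global formula of Lemma \ref{a}, and then localizing each side using the definition of $\partial\Omega$---is clean and entirely standard, and your observation that the localization of the $\Gamma$-term is exactly what the definition of $\partial\Omega$ is designed to deliver is the right point to highlight.
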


Since we consider the biharmonic equations, we shall also generalize these two lemmas to the higher order case.

\begin{lemma}\label{c}
Suppose that $u\in W^{2,2}(V)$. Then for any $v\in C_{c}(V)$ we have
\begin{equation*}\label{partw22v}
\int_{V}(\Delta^{2} u)vd\mu=\int_{V}\Delta u\Delta vd\mu,
\end{equation*}
where $\Delta^{2} u=\Delta(\Delta u)$.
\end{lemma}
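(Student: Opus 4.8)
The plan is to establish the identity $\int_V (\Delta^2 u) v\, d\mu = \int_V \Delta u\, \Delta v\, d\mu$ for $u \in W^{2,2}(V)$ and $v \in C_c(V)$ by first recalling what $\Delta^2 u$ means. By the distributional definition given in the introduction, $\int_V (\Delta^2 u)\phi\, d\mu = \int_V \Delta u\, \Delta\phi\, d\mu$ for every $\phi \in C_c(V)$; so the content of the lemma is really that for $u \in W^{2,2}(V)$ the function $\Delta^2 u := \Delta(\Delta u)$ (the pointwise double Laplacian) actually represents this distribution, i.e. that $\Delta u \in W^{1,2}(V)$ so that Lemma~\ref{a} applies twice. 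Thus the first step is to show $\Delta u \in W^{1,2}(V)$ whenever $u \in W^{2,2}(V)$, equivalently that $\int_V |\nabla(\Delta u)|^2\, d\mu < +\infty$ and $\int_V |\Delta u|^2 \, d\mu < +\infty$; the latter is immediate from the definition of the $W^{2,2}(V)$ norm.

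For the gradient bound, I would estimate $|\nabla(\Delta u)|^2(x) = \frac{1}{2\mu(x)}\sum_{y\sim x}\omega_{xy}(\Delta u(y) - \Delta u(x))^2$ pointwise, using $(a-b)^2 \le 2a^2 + 2b^2$, the lower bound $\mu(x) \ge \mu_{\min}$ from $(G_3)$, and the uniform bound $\sum_{y\sim x}\omega_{xy} < C$ from $(G_4)$, to conclude that $|\nabla(\Delta u)|^2(x) \le \frac{C}{\mu_{\min}}\big((\Delta u(x))^2 + \frac{1}{\deg\text{-type sum}}\sum_{y\sim x}\omega_{xy}(\Delta u(y))^2\big)$, and then sum over $x \in V$. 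Interchanging the order of summation (everything is nonnegative, so Tonelli applies), the double sum $\sum_x \sum_{y\sim x}\omega_{xy}(\Delta u(y))^2$ becomes $\sum_y (\Delta u(y))^2 \sum_{x\sim y}\omega_{xy} \le C\sum_y (\Delta u(y))^2 \le \frac{C}{\mu_{\min}}\int_V |\Delta u|^2\, d\mu < +\infty$. This shows $\Delta u \in W^{1,2}(V)$.

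Once $\Delta u \in W^{1,2}(V)$ is established, the conclusion follows by applying Lemma~\ref{a} twice. First, since $v \in C_c(V)$ and $\Delta u \in W^{1,2}(V)$, Lemma~\ref{a} gives $\int_V \nabla(\Delta u)\nabla v\, d\mu = -\int_V \Delta(\Delta u)\, v\, d\mu = -\int_V (\Delta^2 u)\, v\, d\mu$. Second, I need $\int_V \Delta u\, \Delta v\, d\mu = -\int_V \nabla(\Delta u)\nabla v\, d\mu$; applying Lemma~\ref{a} with the roles adjusted, i.e. using that $\Delta u \in W^{1,2}(V)$ as the "$v$"-type argument and $v \in C_c(V)$ (hence certainly in $W^{1,2}(V)$) — more cleanly, apply Lemma~\ref{a} to the pair $(v, \Delta u)$ wait, Lemma~\ref{a} requires the second slot to be in $C_c(V)$; since $v \in C_c(V)$, I apply it as $\int_V \nabla v\, \nabla(\Delta u)\, d\mu = -\int_V (\Delta v)(\Delta u)\, d\mu$, using symmetry of $\nabla\cdot\nabla = \Gamma(\cdot,\cdot)$. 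Combining the two displayed identities yields $\int_V (\Delta^2 u)v\, d\mu = \int_V \Delta u\, \Delta v\, d\mu$, as claimed.

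The main obstacle is the first step: verifying that $\Delta u \in W^{1,2}(V)$, because this is where the structural hypotheses $(G_3)$ and $(G_4)$ on the graph are genuinely used and where one must be careful to justify the interchange of summation and to track the constants. Everything after that is a mechanical double application of the already-proven Lemma~\ref{a}. One subtlety worth flagging in the writeup is that $C_c(V) \subset W^{1,2}(V)$ (indeed $C_c(V) \subset W^{2,2}(V)$), which is needed so that Lemma~\ref{a} can be invoked with $v$ also playing the role of a $W^{1,2}(V)$ function in the second application; this containment is immediate since a finitely supported function has finite $W^{2,2}(V)$ norm on a locally finite graph.
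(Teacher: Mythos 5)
Your overall strategy---one application of Lemma \ref{a} to pass from $\int_V\Delta(\Delta u)\,v\,d\mu$ to $-\int_V\nabla(\Delta u)\nabla v\,d\mu$, followed by a second integration by parts to reach $\int_V\Delta u\,\Delta v\,d\mu$---is exactly the paper's, and your preliminary verification that $\Delta u\in W^{1,2}(V)$, using $(G_3)$, $(G_4)$ and Tonelli, is a worthwhile addition that the paper leaves implicit. The problem is the second step. You correctly notice that Lemma \ref{a} does not apply to the pair you need: the identity required is $\int_V\Gamma(v,\Delta u)\,d\mu=-\int_V(\Delta v)\,\Delta u\,d\mu$, with the Laplacian falling on the finitely supported function $v$ and with $\Delta u$, which is \emph{not} in $C_c(V)$, sitting in the test-function slot. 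You then assert the identity anyway, ``using symmetry of $\Gamma$''. But symmetry only gives $\Gamma(v,\Delta u)=\Gamma(\Delta u,v)$, which feeds back into the identity you already have from the first application; it does not produce the adjoint identity in which the Laplacian lands on $v$. That adjoint identity is the actual content of the lemma, and as written it is asserted rather than proved.

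Either of two short arguments closes the gap. (i) Do what the paper does: write $-\int_V\nabla(\Delta u)\nabla v\,d\mu=-\frac{1}{2}\sum_{x\in V}\sum_{y\sim x}\omega_{xy}(v(y)-v(x))(\Delta u(y)-\Delta u(x))$, observe that since $v\in C_c(V)$ and $G$ is locally finite only finitely many pairs contribute, split into the $\Delta u(y)$ and $\Delta u(x)$ parts, relabel $x\leftrightarrow y$ in the first part using $\omega_{xy}=\omega_{yx}$, and recognize each resulting sum as $\frac{1}{2}\int_V\Delta u\,\Delta v\,d\mu$. (ii) Alternatively, extend Lemma \ref{a} by density (via Proposition \ref{w12} and Cauchy--Schwarz) to test functions in $W^{1,2}(V)$, which is where your check that $\Delta u\in W^{1,2}(V)$ would genuinely pay off, and then apply the extended lemma with $v\in C_c(V)\subset W^{1,2}(V)$ in the first slot and $\Delta u$ in the second. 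Your proposal identifies the right target identity but supplies neither argument, so the crucial step is missing.
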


\begin{proof}
By using Lemma \ref{a}, we have
\begin{eqnarray*}
\int_{V}(\Delta^{2} u) v d\mu
&=&\int_{V}\Delta(\Delta u)v d\mu\\
&=&-\int_{V}\nabla(\Delta u) \nabla v d\mu\\
&=&-\frac{1}{2}\underset{x\in V}\sum\underset{y\sim x}\sum\omega_{xy}(v(y)-v(x))(\Delta u(y)-\Delta u(x))\\
&=&-\frac{1}{2}\underset{x\in V}\sum\underset{y\sim x}\sum\omega_{xy}(v(y)-v(x))\Delta u(y)+\frac{1}{2}\underset{x\in V}\sum\underset{y\sim x}\sum\omega_{xy}(v(y)-v(x))\Delta u(x)\\
&=&-\frac{1}{2}\underset{y\in V}\sum\underset{x\sim y}\sum\omega_{xy}(v(y)-v(x))\Delta u(y)+\frac{1}{2}\underset{x\in V}\sum\underset{y\sim x}\sum\omega_{xy}(v(y)-v(x))\Delta u(x)\\
&=&-\frac{1}{2}\underset{y\in V}\sum\underset{x\sim y}\sum\omega_{xy}(v(y)-v(x))\Delta u(y)+\frac{1}{2}\int_{V}\Delta u\Delta vd\mu\\
&=&\frac{1}{2}\underset{x\in V}\sum\underset{y\sim x}\sum\omega_{xy}(v(y)-v(x))\Delta u(x)+\frac{1}{2}\int_{V}\Delta u\Delta vd\mu\\
&=&\int_{V}\Delta u\Delta vd\mu.
\end{eqnarray*}
\end{proof}

\begin{lemma}\label{d}
Suppose that $u\in W^{2,2}(V)$. Then for any $v\in C_{c}(\Omega)$, we have
\begin{equation*}\label{partw22omega}
\int_{\Omega}(\Delta^{2} u)v d\mu=\int_{\Omega\cup\partial\Omega}\Delta u\Delta v d\mu.
\end{equation*}
\end{lemma}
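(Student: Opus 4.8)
The plan is to mimic the computation in Lemma \ref{c}, but carefully track what happens on the boundary $\partial\Omega$, since $v\in C_c(\Omega)$ vanishes on $\partial\Omega$ (together with $v$ being supported in $\Omega$) while $\Delta u$ need not. First I would write
$$\int_\Omega (\Delta^2 u)v\, d\mu = \int_\Omega \Delta(\Delta u)\, v\, d\mu,$$
and apply Lemma \ref{b} with the choice $w=\Delta u$ in place of $u$ (note $\Delta u\in W^{1,2}(V)$ whenever $u\in W^{2,2}(V)$, which is exactly what makes the hypotheses of Lemma \ref{b} available). This immediately gives
$$\int_\Omega (\Delta^2 u)\, v\, d\mu = -\int_{\Omega\cup\partial\Omega} \nabla(\Delta u)\,\nabla v\, d\mu = -\frac{1}{2}\sum_{x\in\Omega\cup\partial\Omega}\sum_{y\sim x}\omega_{xy}\bigl(v(y)-v(x)\bigr)\bigl(\Delta u(y)-\Delta u(x)\bigr).$$

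Next I would reorganize this double sum exactly as in the proof of Lemma \ref{c}: split the product $(v(y)-v(x))(\Delta u(y)-\Delta u(x))$ into the two pieces $(v(y)-v(x))\Delta u(y)$ and $(v(y)-v(x))\Delta u(x)$, and in the first piece swap the roles of $x$ and $y$ using the symmetry $\omega_{xy}=\omega_{yx}$. The key bookkeeping point is the range of summation. Because $v\in C_c(\Omega)$, the term $(v(y)-v(x))$ vanishes unless at least one of $x,y$ lies in $\Omega$; and when we reindex, the outer vertex ranges over $\Omega\cup\partial\Omega$ while the relevant contributions come precisely from edges with an endpoint in $\Omega$. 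Carrying this out, the two reorganized sums combine to give $2\cdot\frac12\sum_{x\in\Omega\cup\partial\Omega}\frac{1}{\mu(x)}\sum_{y\sim x}\omega_{xy}(v(y)-v(x))\cdot\frac{\Delta u(x)}{\mu(x)}\mu(x)$-type expressions; more precisely, after the swap the first sum equals the negative of the second plus $\int_{\Omega\cup\partial\Omega}\Delta u\,\Delta v\, d\mu$, and one arrives at
$$\int_\Omega (\Delta^2 u)\, v\, d\mu = \frac{1}{2}\int_{\Omega\cup\partial\Omega}\Delta u\,\Delta v\, d\mu + \frac{1}{2}\int_{\Omega\cup\partial\Omega}\Delta u\,\Delta v\, d\mu = \int_{\Omega\cup\partial\Omega}\Delta u\,\Delta v\, d\mu,$$
as desired. (Here $\Delta v$ is computed with the graph Laplacian on all of $V$, so that $\Delta v$ may be nonzero on $\partial\Omega$ even though $v$ vanishes there; that is why the right-hand side integrates over $\Omega\cup\partial\Omega$ rather than just $\Omega$.)

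The main obstacle, and the step deserving the most care, is the boundary bookkeeping in the reindexing: one must verify that swapping $x\leftrightarrow y$ in the term $\sum_x\sum_{y\sim x}\omega_{xy}(v(y)-v(x))\Delta u(y)$ does not lose or duplicate any edge contributions, given that the summation domain is $\Omega\cup\partial\Omega$ and $v$ is supported in $\Omega$ only. Concretely, an edge $xy$ with $x\in\Omega$ and $y\in\partial\Omega$ contributes a term involving $\Delta u(y)$ with $y\in\partial\Omega$, and we need this term to appear correctly after reindexing so that it is absorbed into $\int_{\Omega\cup\partial\Omega}\Delta u\,\Delta v\,d\mu$; edges with both endpoints outside $\Omega$ contribute nothing because $v\equiv 0$ there. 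Since $\Omega$ is a bounded domain and hence finite, all sums are finite and no convergence issues arise, so once the index ranges are pinned down the identity follows by the same algebraic manipulation as in Lemma \ref{c}.
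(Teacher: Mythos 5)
Your proof is correct, but it takes a genuinely different route from the paper's. The paper simply notes that $C_{c}(\Omega)\subset C_{c}(V)$, applies Lemma \ref{c} globally to get $\int_{\Omega}(\Delta^{2}u)v\,d\mu=\int_{V}(\Delta^{2}u)v\,d\mu=\int_{V}\Delta u\,\Delta v\,d\mu$, and then reduces the claim to the single observation that $\Delta v(x)=0$ for every $x\in V\setminus(\Omega\cup\partial\Omega)$, since both $v(x)$ and $v(y)$ for all $y\sim x$ vanish there; no re-summation is needed. You instead localize from the outset, applying Lemma \ref{b} to $w=\Delta u$ and then re-running the index-swapping computation of Lemma \ref{c} over $\Omega\cup\partial\Omega$. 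This works, and you correctly identify the one delicate point, namely that after swapping $x\leftrightarrow y$ the only pairs contributing are those with at least one endpoint in $\Omega$ (hence both in $\Omega\cup\partial\Omega$), so no edge is lost or duplicated. Two small remarks: your assertion that $\Delta u\in W^{1,2}(V)$ for $u\in W^{2,2}(V)$ is true but not immediate from the definition of the norm; it needs the elementary estimate $\int_{V}|\nabla w|^{2}d\mu\leq \frac{2C}{\mu_{\min}}\int_{V}w^{2}d\mu$, which follows from $(G_{3})$ and $(G_{4})$ applied to $w=\Delta u\in L^{2}(V)$ (alternatively, since $v$ is finitely supported all sums are finite and the hypothesis can be bypassed). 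The paper's argument sidesteps both this and the boundary bookkeeping entirely, at the price of having already done the swap once in Lemma \ref{c}; your version makes the boundary mechanism explicit, which is instructive but longer.
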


\begin{proof}
By using Lemma \ref{c}, we only need prove that

\begin{eqnarray*}
\int_{V\setminus\Omega\cup\partial\Omega}\Delta u\Delta v=0.
\end{eqnarray*}

Since $v\in C_{c}(\Omega)$, $v=0$ on $V\setminus\Omega$. Thus, for any $x\in V\setminus\{\Omega\cup\partial\Omega\}$, there hold $v(x)=0$ and $v(y)=0$ for all $y\sim x$. Therefore, we get for $x\in V\setminus\{\Omega\cup\partial\Omega\}$
\begin{eqnarray*}
\Delta v(x)=\frac{1}{\mu(x)}\sum_{y\sim x}\omega_{xy}(v(y)-v(x))=0,
\end{eqnarray*}
and the lemma is proved.
\end{proof}

Now we can define the weak solution of the equation \eqref{equation} as

\begin{definition}\label{defofws}
Suppose $u\in E_{\lambda}$. If for any $\phi\in E_\lambda$, there holds
\begin{equation*}\label{weaksolution}
\int_{V}(\Delta u\Delta\phi+\nabla u\nabla\phi+(\lambda a+1)u\phi)d\mu=\int_{V}|u|^{p-2}u\phi d\mu,
\end{equation*}
then $u$ is called a weak solution of \eqref{equation}.
\end{definition}

Similarly, the weak solution of the equation \eqref{dirichlet} is defined as

\begin{definition}
Suppose $u\in H(\Omega)$. If for any $\phi\in H(\Omega)$, there holds
\begin{equation*}\label{weaksolutiondirichlet}
\int_{\Omega\cap\partial\Omega}\Delta u\Delta\phi +\nabla u\nabla\phi d\mu+\int_{\Omega}u\phi d\mu=\int_{\Omega}|u|^{p-1}u\phi d\mu,
\end{equation*}
then $u$ is called a weak solution of \eqref{dirichlet}.
\end{definition}

Finally in this section, we come to the Sobolev embedding theorems on the graphs. Since we are concerned with the behavior of solutions $u_{\lambda}$ of \eqref{equation} as $\lambda\rightarrow +\infty$, without loss of generality, we can assume that $\lambda>1$. For brevity, We use $\|\cdot\|_{q,V}$ and $\|\cdot\|_{q,\Omega}$ to denote the $L^{q}$ norms on $V$ and $\Omega$ respectively and we sometimes omit the subscripts $V$ and $\Omega$ if it is clear from the context.
\begin{lemma}\label{e}
Assume that $a(x)$ satisfies $(A_{1})$ and $(A_{2})$. Then $E_{\lambda}$ is continuously embedded into $L^{q}(V)$ for any $q\in [2,+\infty]$ and the embedding is independent of $\lambda$. Namely, there exists a constant $\eta_{q}$ depending only on $q$ such that for any $u\in E_{\lambda}$,
\begin{equation*}
\|u\|_{q,V}\leq \eta_{q}\|u\|_{E_{\lambda}}.
\end{equation*}
Moreover, for any bounded sequence $\{u_{k}\}\subset E_{\lambda}$, there exists $u\in E_{\lambda}$ such that, up to a subsequence,
$$\left\{\aligned&u_{k} \rightharpoonup u  &\hbox{in} \ \ E_{\lambda};\\
&u_{k}(x)\rightarrow u(x) &\forall x\in V;\\
&u_{k} \rightarrow u &\hbox{in}\ \ L^{q}(V).
\endaligned\right.$$
\end{lemma}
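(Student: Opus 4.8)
The plan is to prove the embedding estimate first, then the compactness statements. For the continuous embedding, I would start with the pointwise bound: for any $x\in V$ and any $u\in E_\lambda$,
$$
u(x)^2 \mu(x) \le \sum_{y\in V} u(y)^2\mu(y) = \|u\|_{2,V}^2 \le \|u\|_{E_\lambda}^2,
$$
so by $(G_3)$ we get $|u(x)|\le \mu_{\min}^{-1/2}\|u\|_{E_\lambda}$ for every $x$, which gives the $q=\infty$ case with $\eta_\infty = \mu_{\min}^{-1/2}$. For $q=2$ we trivially have $\|u\|_{2,V}\le \|u\|_{E_\lambda}$ since $\lambda>1$ and $a\ge 0$. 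For $2<q<\infty$ I would interpolate: write $\int_V |u|^q d\mu = \int_V |u|^{q-2} u^2 d\mu \le \|u\|_{\infty,V}^{q-2}\|u\|_{2,V}^2 \le \mu_{\min}^{-(q-2)/2}\|u\|_{E_\lambda}^{q}$, so $\eta_q = \mu_{\min}^{-(q-2)/(2q)}$ works, and crucially none of these constants depend on $\lambda$ (here one uses $\lambda>1$ so that $\|u\|_{E_\lambda}$ dominates $\|u\|_{W^{2,2}(V)}$, hence $\|u\|_{2,V}$).

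For the compactness part, let $\{u_k\}$ be bounded in $E_\lambda$, say $\|u_k\|_{E_\lambda}\le M$. Since $E_\lambda$ is a Hilbert space, a subsequence converges weakly to some $u\in E_\lambda$; extracting once more I may assume $u_k\rightharpoonup u$ in $E_\lambda$ and hence also $u_k\rightharpoonup u$ in $W^{2,2}(V)$. Weak convergence in $E_\lambda$ tested against the (finitely supported, hence in $E_\lambda$) delta-type function $\mathbf{1}_{\{x\}}$ gives $u_k(x)\to u(x)$ for every fixed $x\in V$; this yields pointwise convergence on all of $V$. The main work is upgrading pointwise convergence to strong $L^q(V)$ convergence, and the key is a uniform decay-at-infinity estimate coming from $(A_2)$. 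Fix $\varepsilon>0$. By $(A_2)$ there is $R>0$ such that $\lambda a(x)\ge \lambda a(x) \ge 1/\varepsilon$ whenever $d(x,x_0)>R$; then for the tail,
$$
\sum_{d(x,x_0)>R} u_k(x)^2\mu(x) \le \varepsilon \sum_{d(x,x_0)>R}\lambda a(x) u_k(x)^2\mu(x) \le \varepsilon \|u_k\|_{E_\lambda}^2 \le \varepsilon M^2,
$$
uniformly in $k$, and the same bound holds for $u$. Since the ball $\{d(x,x_0)\le R\}$ contains only finitely many vertices (by $(G_1)$), on that finite set pointwise convergence is equivalent to $L^2$ convergence, so $\|u_k-u\|_{2,V}^2 \le \|u_k-u\|_{2,\{d\le R\}}^2 + 2(\varepsilon M^2 + \varepsilon M^2) \to$ something $\le 4\varepsilon M^2$ in the limsup; letting $\varepsilon\to 0$ gives $u_k\to u$ in $L^2(V)$. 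For general $q\in(2,\infty]$, I would combine $L^2$ convergence with the uniform $L^\infty$ bound $\|u_k-u\|_{\infty,V}\le 2\mu_{\min}^{-1/2}M$ via the interpolation $\|u_k-u\|_{q,V}^q \le \|u_k-u\|_{\infty,V}^{q-2}\|u_k-u\|_{2,V}^2\to 0$ (and for $q=\infty$, pointwise convergence plus the uniform tail estimate directly gives $\sup_x|u_k(x)-u(x)|\to 0$, since outside a fixed finite set both are small and on the finite set there are only finitely many vertices).

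The step I expect to be the main obstacle is precisely the uniform tail control and the bookkeeping that turns it, together with finiteness of balls, into strong convergence; everything else is either a one-line pointwise estimate or a standard Hilbert-space weak-compactness argument. One subtlety worth stating carefully is that the constant $\eta_q$ must be exhibited independently of $\lambda$, which is why I only ever bound $\|u\|_{E_\lambda}$ from below by $\|u\|_{W^{2,2}(V)}$ (valid since $\lambda a+1\ge 1$) and never the other way around; the dependence on $\lambda$ would otherwise creep in through the tail estimate, but there it only helps (larger $\lambda$ makes the tail smaller). I would also remark that the weak limit $u$ genuinely lies in $E_\lambda$, not merely in $W^{2,2}(V)$, because $E_\lambda$ is itself a Hilbert space and bounded sequences have weakly convergent subsequences within it; this is needed for the first line of the displayed conclusion.
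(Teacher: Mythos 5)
Your proposal is correct and follows essentially the same route as the paper: the pointwise bound $|u(x)|\le\mu_{\min}^{-1/2}\|u\|_{E_\lambda}$ for the $L^\infty$ embedding, interpolation for $2<q<\infty$, Hilbert-space weak compactness plus testing against indicator functions for pointwise convergence, and the tail estimate from $(A_2)$ combined with finiteness of balls to upgrade to strong $L^q$ convergence. The only step to phrase carefully is the pointwise convergence: pair $u_k-u$ with $\mathbf{1}_{\{x\}}$ in the $L^2$ inner product (weak convergence in $E_\lambda$ implies weak convergence in $L^2$ through the continuous embedding, which is what the paper does), since pairing in the $E_\lambda$ inner product against $\mathbf{1}_{\{x\}}$ involves values of $u_k$ at neighboring vertices rather than $u_k(x)$ alone.
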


\begin{proof}
For any $u\in E_{\lambda}$ and vertex $x_{0}\in V$, we have
$$\aligned \|u\|^{2}_{E_{\lambda}}&=\int_{V}(|\Delta u|^{2}+|\nabla u|^{2}+(\lambda a+1)u^{2})d\mu\\
&\geq\int_{V}u^{2}d\mu\\
&=\underset{x\in V}\sum u(x)^{2}\mu(x)\\
&\geq\mu_{\min}u(x_{0})^{2},\endaligned$$
which gives
\begin{equation*}
u(x_{0})\leq\left(\frac{1}{\mu_{\min}}\right)^{\frac{1}{2}}\|u\|_{E_{\lambda}}.
\end{equation*}
Therefore, $E_{\lambda} \hookrightarrow L^{\infty}(V)$ continuously and the embedding is independent of $\lambda.$ Thus $E_{\lambda} \hookrightarrow L^{q}(V)$ continuously for any $2\leq q<\infty.$ In fact, for any $u\in E_{\lambda}$, we have $u\in L^{2}(V)$. Then, for any $2\leq q<\infty$,
$$\int_{V}|u|^{q}d\mu=\int_{V}|u|^{2}|u|^{q-2}d\mu\leq(\mu_{\min})^{\frac{2-q}{2}}\|u\|^{q-2}_{E_{\lambda}}\int_{V}|u|^{2}d\mu<+\infty,$$
which implies that $u\in L^{q}(V)$, $2\leq q<\infty.$

Since $E_{\lambda}$ is a Hilbert space, it is reflexive. Thus for any bounded sequence $\{u_{k}\}$ in $E_{\lambda}$, we have that, up to a subsequence, $u_{k} \rightharpoonup u$ in $E_{\lambda}$. On the other hand, $\{u_{k}\}\subset E_{\lambda}$ is also bounded in $L^{2}(V)$ and we get $u_{k} \rightharpoonup u$ in $L^{2}(V)$, which tells us that, for any $v\in L^{2}(V)$,
\begin{equation}\label{pointwise}
\underset{k\rightarrow\infty}\lim\int_{V}(u_{k}-u)vd\mu=\underset{k\rightarrow\infty}\lim\underset{x\in V}\sum\mu(x)(u_{k}(x)-u(x))v(x)=0.
\end{equation}
Take any $x_{0}\in V$ and let
$$v_{0}(x)=\left\{\aligned&1 &x=x_{0},\\
&0 &x\neq x_{0}.\endaligned\right.$$
Obviously, $v_0$ belongs to $L^{2}(V)$.
By substituting $v_{0}$ into \eqref{pointwise}, we get
$$\underset{k\rightarrow\infty}\lim\mu(x_{0})(u_{k}(x_{0})-u(x_{0}))=0,$$
which implies that $\underset{k\rightarrow\infty}\lim u_{k}(x)=u(x)$ for any $x\in V.$

Next we prove $u_{k}\rightarrow u$ in $L^{q}(V)$ for all $2\leq q\leq +\infty.$ Since $\{u_{k}\}$ bounded in $E_{\lambda}$ and $u\in E_{\lambda}$, there exists some constant $C_{1}$ such that
$$\|u_{k}-u\|^{2}_{E_{\lambda}}\leq C_{1}.$$
Let $x_{0}\in V$ be fixed. For any $\epsilon>0$, in view of $(A_{2})$, there exists some $R>0$ such that when $d(x,x_{0})>R$
$$a(x)\geq\frac{C_{1}}{\epsilon}.$$
Noticing $\lambda>1$, we have
\begin{eqnarray}\label{greatthanr}
\int_{d(x,x_{0})>R}|u_{k}-u|^{2}d\mu
&\leq&\frac{\epsilon}{C_{1}}\int_{d(x,x_{0})>R}a|u_{k}-u|^{2}d\mu\nonumber\\
&\leq&\frac{\epsilon}{C_{1}} \|u_{k}-u\|^{2}_{E_{\lambda}}\nonumber\\
&\leq&\epsilon.
\end{eqnarray}
Moreover, up to a subsequence, we have
\begin{equation}\label{leqr}
\underset{k\rightarrow +\infty}\lim\int_{d(x,x_{0})\leq R}|u_{k}-u|^{2}d\mu=0.
\end{equation}
Combining \eqref{greatthanr} and \eqref{leqr}, we conclude
$$\underset{k\rightarrow +\infty}\liminf\int_{V}|u_{k}-u|^{2}d\mu=0.$$
In particular, up to a subsequence, there holds $u_{k}\rightarrow u$ in $L^{2}(V)$. Since
$$\|u_{k}-u\|^{2}_{L^{\infty}(V)}\leq\frac{1}{\mu_{\min}}\int_{V}|u_{k}-u|^{2}d\mu,$$
for any $2<q<+\infty$, we have
$$\int_{V}|u_{k}-u|^{q}d\mu\leq\|u_{k}-u\|^{q-2}_{L^{\infty}(V)}\int_{V}|u_{k}-u|^{2}d\mu\rightarrow 0\ \ \hbox{as} \ \ k\rightarrow\infty.$$
Therefore, up to a subsequence, $u_{k}\rightarrow u$ in $L^{q}(V)$ for all $2\leq q\leq +\infty.$
\end{proof}

\begin{lemma}\label{f}
Assume that $\Omega$ is a bounded domain in $V$. Then $H(\Omega)$ is compactly embedded into $L^{q}(\Omega)$ for any $q\in [1,+\infty]$. In particular, there exists a constant $C$ depending only on $q$ such that for any $u\in H(\Omega)$,
$$\|u\|_{q,\Omega}\leq C\|u\|_{H(\Omega)}.$$
Moreover, $H(\Omega)$ is  pre-compact. Namely, if $u_{k}$ is bounded in $H(\Omega)$,
up to a subsequence, there exists some $u\in H(\Omega)$ such that $u_{k}\rightarrow u$ in $H(\Omega).$
\end{lemma}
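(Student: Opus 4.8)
The plan is to exploit the crucial structural fact, recorded in the Remark above, that a bounded domain $\Omega\subset V$ contains only finitely many vertices, so that $H(\Omega)$ is a \emph{finite-dimensional} normed space. Once this is observed, every assertion of the lemma reduces to elementary finite-dimensional linear algebra and the fact that on a finite-dimensional space all norms are equivalent. First I would make explicit that $\Omega\cup\partial\Omega$ is finite (by $(G_1)$ and boundedness of $\Omega$), hence any function in $H(\Omega)$ is determined by its finitely many values on $\Omega\cup\partial\Omega$, and in fact, since elements of $H(\Omega)\subset W^{1,2}_0(\Omega)=C_c(\Omega)$ vanish on $\partial\Omega$, by its values on $\Omega$. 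Therefore $H(\Omega)$ is linearly isomorphic to $\mathbb{R}^{|\Omega|}$.

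Next I would establish the embedding inequality $\|u\|_{q,\Omega}\le C\|u\|_{H(\Omega)}$ for every $q\in[1,+\infty]$. On the finite set $\Omega$ the quantity $\|u\|_{q,\Omega}=\bigl(\sum_{x\in\Omega}\mu(x)|u(x)|^q\bigr)^{1/q}$ (with the obvious modification for $q=\infty$) is a norm on $H(\Omega)$; since $\|\cdot\|_{H(\Omega)}$ is also a norm on this same finite-dimensional space, the two are comparable, which gives the constant $C=C(q)$. Alternatively, and more concretely, one can simply note $\mu(x)|u(x)|^2\le\|u\|_{2,\Omega}^2\le\|u\|_{H(\Omega)}^2$ pointwise on $\Omega$, so $\|u\|_{\infty,\Omega}\le\mu_{\min}^{-1/2}\|u\|_{H(\Omega)}$, and then control any $L^q$ norm by the $L^\infty$ norm times $(\sum_{x\in\Omega}\mu(x))^{1/q}=\mu(\Omega)^{1/q}$, exactly as in the proof of Lemma \ref{e}; this handles $1\le q\le\infty$ uniformly.

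For the compactness ("pre-compactness") statement, let $\{u_k\}$ be bounded in $H(\Omega)$. Then for each of the finitely many $x\in\Omega$ the real sequence $\{u_k(x)\}$ is bounded in $\mathbb{R}$ (by the pointwise bound just derived), so by Bolzano--Weierstrass and a finite diagonal extraction over $x\in\Omega$ we may pass to a subsequence along which $u_k(x)\to u(x)$ for every $x\in\Omega$; set $u(y)=0$ for $y\in\partial\Omega$. This limit $u$ lies in $C_c(\Omega)$, hence in $H(\Omega)$. Because $H(\Omega)$ is finite-dimensional, pointwise convergence of $u_k$ to $u$ on $\Omega$ is equivalent to convergence $u_k\to u$ in \emph{any} norm on $H(\Omega)$, in particular in $\|\cdot\|_{H(\Omega)}$ and in every $L^q(\Omega)$, $q\in[1,\infty]$. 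This proves both the compact embedding into $L^q(\Omega)$ and the pre-compactness of $H(\Omega)$.

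There is no serious obstacle here: the entire lemma is a manifestation of the finiteness of $\Omega$, and the only point that requires a word of care is making rigorous that $H(\Omega)$ really is finite-dimensional — i.e. that the Hilbert-space completion defining $W^{1,2}_0(\Omega)$ does not introduce new elements beyond $C_c(\Omega)$ — which is precisely the content of the Remark following Proposition \ref{w22} and may simply be cited. Once that is in hand, I would keep the write-up short, since the argument is the finite-dimensional analogue of (and strictly simpler than) the proof of Lemma \ref{e}.
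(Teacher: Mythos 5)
Your proposal is correct and follows exactly the same approach as the paper: the authors' entire proof consists of observing that $\Omega$ is a finite set, so $H(\Omega)$ is finite-dimensional, and then declaring the conclusions obvious. You have simply filled in the routine details (equivalence of norms, Bolzano--Weierstrass) that the paper omits.
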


\begin{proof} Since $\Omega$ is a finite set in $V$, $H(\Omega)$ is a finite dimensional space. Therefore the conclusions of the lemma are obvious and we omit the proofs here. \end{proof}

\section{The existence of ground state solutions }

In this section we prove the existence of ground state solutions of \eqref{equation} by the Nehari method. First we prove the following lemmas.

\begin{lemma}\label{g}
If $u\in E_{\lambda}$ is a weak solution of \eqref{equation}, $u$ is also a point-wise solution of the equation.
\end{lemma}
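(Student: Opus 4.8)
The plan is to reduce the weak formulation in Definition \ref{defofws} to a pointwise identity by testing against functions supported at a single vertex. Note first that every $\phi\in C_{c}(V)$ lies in $E_{\lambda}$: such a $\phi$ has finite $W^{2,2}(V)$-norm by $(G_{1})$, $(G_{3})$, $(G_{4})$, and $\int_{V}\lambda a\phi^{2}\,d\mu$ is a finite sum, hence finite. Thus the defining identity
$$\int_{V}(\Delta u\Delta\phi+\nabla u\nabla\phi+(\lambda a+1)u\phi)\,d\mu=\int_{V}|u|^{p-2}u\phi\,d\mu$$
holds in particular for all $\phi\in C_{c}(V)$.

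Next I would rewrite the left-hand side using integration by parts. Since $u\in E_{\lambda}\subset W^{2,2}(V)\subset W^{1,2}(V)$, Lemma \ref{c} gives $\int_{V}\Delta u\Delta\phi\,d\mu=\int_{V}(\Delta^{2}u)\phi\,d\mu$ and Lemma \ref{a} gives $\int_{V}\nabla u\nabla\phi\,d\mu=-\int_{V}(\Delta u)\phi\,d\mu$, where $\Delta^{2}u=\Delta(\Delta u)$ is well defined pointwise because $\Delta u\in L^{2}(V)$ and, by local finiteness, $\Delta(\Delta u)(x)$ is a finite sum for each $x\in V$. Substituting these two identities, the weak formulation becomes
$$\int_{V}\bigl(\Delta^{2}u-\Delta u+(\lambda a+1)u-|u|^{p-2}u\bigr)\phi\,d\mu=0\qquad\text{for all }\phi\in C_{c}(V).$$

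Finally, fixing $x_{0}\in V$ and taking $\phi$ to be the indicator function of $\{x_{0}\}$ (which lies in $C_{c}(V)$), the integral collapses to a single term and yields
$$\mu(x_{0})\bigl(\Delta^{2}u(x_{0})-\Delta u(x_{0})+(\lambda a(x_{0})+1)u(x_{0})-|u(x_{0})|^{p-2}u(x_{0})\bigr)=0.$$
Since $\mu(x_{0})\geq\mu_{\min}>0$ by $(G_{3})$, the bracket vanishes, and as $x_{0}\in V$ was arbitrary, $u$ solves \eqref{equation} pointwise. There is no serious analytic difficulty here; the only points requiring attention are the admissibility of the single-vertex test functions in $E_{\lambda}$ and the fact — supplied precisely by Lemma \ref{c} — that the distributionally defined biharmonic operator agrees with the iterated pointwise Laplacian, so that the notion of a pointwise solution is meaningful.
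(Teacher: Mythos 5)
Your argument is correct and follows essentially the same route as the paper: restrict the weak identity to $\phi\in C_{c}(V)$, convert $\int_{V}\Delta u\Delta\phi\,d\mu$ to $\int_{V}(\Delta^{2}u)\phi\,d\mu$ via Lemma \ref{c} (and the gradient term via Lemma \ref{a}), then test with the indicator of a single vertex and use $\mu(x_{0})\geq\mu_{\min}>0$. Your version is slightly more explicit than the paper's about why single-vertex test functions are admissible in $E_{\lambda}$, but the substance is identical.
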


\begin{proof}
Since $u\in E_{\lambda}$ is a weak solution of \eqref{equation},
for any $\phi\in E_{\lambda}$, there holds
$$
\int_{V}(\Delta u\Delta \phi+\nabla u\nabla\phi+(\lambda a+1)u\phi)d\mu=\int_{V}|u|^{p-2}u\phi d\mu.
$$
Then, by Lemma \ref{c} we have
\begin{equation}\label{test1}
\int_{V} \Delta^{2} u\phi+\nabla u\nabla\phi+(\lambda a+1)u\phi)d\mu=\int_{V}|u|^{p-2}u\phi d\mu,\ \ \forall \phi\in C_{c}(V).
\end{equation}
For any fixed $x_{0}\in V$, taking a test function $\phi: V\rightarrow \mathbb{R}$ in \eqref{test1} with
 $$\phi(x)= \left\{\aligned &1, &x=x_{0},\\
&0, &x\neq x_{0},\endaligned\right.$$
we have
$$
\Delta^{2} u(x_{0})-\Delta u(x_{0})+(\lambda a(x_{0})+1)u(x_{0})-|u(x_{0})|^{p-2}u(x_{0})=0.
$$
Since $x_{0}$ is arbitrary, we conclude that $u$ is a point-wise solution of \eqref{equation}.
\end{proof}

\begin{remark}\label{boundedpointwise}
Similarly, if $u\in H(\Omega)$ is a weak solution of \eqref{dirichlet}, then $u$ is also a point-wise solution of the equation \eqref{dirichlet}.
\end{remark}

\begin{lemma}\label{h}
$\mathcal{N}_{\lambda}$ is non-empty.
\end{lemma}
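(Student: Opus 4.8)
The plan is to show that for any nonzero $u \in E_\lambda$, the ray $\{tu : t > 0\}$ meets $\mathcal{N}_\lambda$ in exactly one point, which in particular produces an element of $\mathcal{N}_\lambda$ and hence shows it is non-empty. First I would fix any $u \in E_\lambda \setminus \{0\}$ (for instance a function supported at a single vertex, which lies in $C_c(V) \subset E_\lambda$) and consider the function
\[
g(t) := J'_\lambda(tu)(tu) = t^2 \|u\|_{E_\lambda}^2 - t^p \int_V |u|^p \, d\mu, \qquad t > 0,
\]
using formula \eqref{frechet}. The point $tu$ lies on $\mathcal{N}_\lambda$ precisely when $g(t) = 0$, i.e. when $t^{p-2} \int_V |u|^p \, d\mu = \|u\|_{E_\lambda}^2$.

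Next I would observe that $\int_V |u|^p \, d\mu > 0$ since $u \not\equiv 0$, and that this quantity is finite: by Lemma \ref{e}, $E_\lambda$ embeds continuously into $L^p(V)$ for $p \geq 2$, so $\int_V |u|^p \, d\mu \leq \eta_p^p \|u\|_{E_\lambda}^p < +\infty$. Since $p > 2$, the equation $t^{p-2} \int_V |u|^p \, d\mu = \|u\|_{E_\lambda}^2$ has the unique positive solution
\[
t_u = \left( \frac{\|u\|_{E_\lambda}^2}{\int_V |u|^p \, d\mu} \right)^{\frac{1}{p-2}},
\]
and then $t_u u \in \mathcal{N}_\lambda$, so $\mathcal{N}_\lambda \neq \varnothing$.

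The argument is essentially elementary once the embedding $E_\lambda \hookrightarrow L^p(V)$ is in hand; the only point requiring care — and the one I would call the main (minor) obstacle — is confirming that the relevant integrals are finite and that $\int_V |u|^p\,d\mu \neq 0$, so that $t_u$ is well-defined and strictly positive. This is exactly where Lemma \ref{e} is used. I would also remark, since it will be needed later for the minimization defining $m_\lambda$, that on this ray $J_\lambda(tu)$ attains its maximum at $t = t_u$ and that $J_\lambda(t_u u) > 0$; this follows from writing $J_\lambda(tu) = \tfrac{1}{2} t^2 \|u\|_{E_\lambda}^2 - \tfrac{1}{p} t^p \int_V |u|^p\,d\mu$ and differentiating in $t$, but for the bare statement that $\mathcal{N}_\lambda$ is non-empty, exhibiting $t_u u$ suffices.
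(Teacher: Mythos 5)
Your proposal is correct and follows essentially the same route as the paper: fix a nonzero $u\in E_\lambda$, set $g(t)=J'_\lambda(tu)(tu)=t^2\|u\|_{E_\lambda}^2-t^p\int_V|u|^p\,d\mu$, and use $p>2$ to find $t_0>0$ with $g(t_0)=0$, so $t_0u\in\mathcal{N}_\lambda$. Your explicit formula for $t_u$ and the verification via Lemma \ref{e} that $0<\int_V|u|^p\,d\mu<+\infty$ merely make explicit what the paper leaves implicit.
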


\begin{proof}
For a fixed $u\in E_{\lambda}\setminus\{0\}$, we define a function $g(t)$ on $\mathbb{R}$ as
$$
g(t)=J^{'}_{\lambda}(tu)tu=t^{2}\int_{V}(|\Delta u|^{2}+|\nabla u|^{2}+(\lambda a+1)u^{2})d\mu-t^{p}\int_{V}|u|^{p}d\mu.
$$
Since $p> 2$ and $u\not\equiv0$, there exists $t_{0}\in (0,+\infty)$ such that $g(t_{0})=0$ which implies that $t_{0}u\in\mathcal{N}_{\lambda}.$
\end{proof}

\begin{lemma}\label{i}
$m_{\lambda}=\underset{u\in \mathcal{N}_{\lambda}}\inf J_{\lambda}(u)>0.$
\end{lemma}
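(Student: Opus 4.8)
The plan is to show that the Nehari manifold is uniformly bounded away from the origin in the $E_\lambda$-norm, and then to convert this into a positive lower bound for $J_\lambda$ on $\mathcal N_\lambda$. First I would take any $u \in \mathcal N_\lambda$, so that $J'_\lambda(u)u = 0$, i.e.
\[
\|u\|_{E_\lambda}^2 = \int_V (|\Delta u|^2 + |\nabla u|^2 + (\lambda a+1)u^2)\,d\mu = \int_V |u|^p\,d\mu = \|u\|_{p,V}^p .
\]
By Lemma~\ref{e}, the embedding $E_\lambda \hookrightarrow L^p(V)$ holds with a constant $\eta_p$ independent of $\lambda$ (here one uses $p > 2$), so $\|u\|_{p,V} \le \eta_p \|u\|_{E_\lambda}$. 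Combining,
\[
\|u\|_{E_\lambda}^2 = \|u\|_{p,V}^p \le \eta_p^p \|u\|_{E_\lambda}^p ,
\]
and since $u \ne 0$ we may divide by $\|u\|_{E_\lambda}^2 > 0$ to get $\|u\|_{E_\lambda}^{p-2} \ge \eta_p^{-p}$, hence
\[
\|u\|_{E_\lambda}^2 \ge \eta_p^{-\frac{2p}{p-2}} =: \rho_0 > 0
\]
for every $u \in \mathcal N_\lambda$, with $\rho_0$ independent of $\lambda$.

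Next I would use the Nehari constraint again to rewrite $J_\lambda$ on $\mathcal N_\lambda$ in a form where only the sign of the coefficient matters. For $u \in \mathcal N_\lambda$,
\[
J_\lambda(u) = \frac12 \|u\|_{E_\lambda}^2 - \frac1p \int_V |u|^p\,d\mu
 = \left(\frac12 - \frac1p\right)\|u\|_{E_\lambda}^2 ,
\]
using $\int_V |u|^p\,d\mu = \|u\|_{E_\lambda}^2$. Since $p > 2$, the factor $\frac12 - \frac1p = \frac{p-2}{2p}$ is strictly positive, so with the lower bound from the previous step,
\[
J_\lambda(u) \ge \frac{p-2}{2p}\,\rho_0 > 0 \qquad \text{for all } u \in \mathcal N_\lambda,
\]
and taking the infimum over $\mathcal N_\lambda$ yields $m_\lambda \ge \frac{p-2}{2p}\rho_0 > 0$.

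There is no serious obstacle here; the only point that needs care is to make sure the Sobolev constant invoked is the $\lambda$-independent one from Lemma~\ref{e} (so that the bound $\rho_0$, and hence $m_\lambda$, does not degenerate as $\lambda \to \infty$ — this uniformity is exactly what will be needed later in the convergence argument of Theorem~\ref{convergence}), and to note that $\|u\|_{E_\lambda} > 0$ genuinely for $u \in \mathcal N_\lambda$ because $\mathcal N_\lambda \subset E_\lambda \setminus \{0\}$ and $\|\cdot\|_{E_\lambda}$ is a norm. If one wants to avoid even mentioning $\rho_0$, one can equivalently argue by contradiction: a minimizing sequence with $J_\lambda \to m_\lambda$ would, if $m_\lambda = 0$, force $\|u_k\|_{E_\lambda} \to 0$, contradicting $\|u_k\|_{E_\lambda}^2 \ge \rho_0$.
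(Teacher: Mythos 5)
Your proposal is correct and follows essentially the same route as the paper: use the Nehari identity $\|u\|_{E_\lambda}^2=\|u\|_p^p$ together with the $\lambda$-independent embedding constant $\eta_p$ from Lemma~\ref{e} to get the uniform lower bound $\|u\|_{E_\lambda}\ge \eta_p^{-p/(p-2)}$, then rewrite $J_\lambda(u)=\bigl(\tfrac12-\tfrac1p\bigr)\|u\|_{E_\lambda}^2$ on $\mathcal N_\lambda$. The constants match the paper's exactly, and your remarks on the $\lambda$-independence of $\eta_p$ and on $\|u\|_{E_\lambda}>0$ are correct but not points the paper needed to elaborate.
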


\begin{proof}
Since $u\in \mathcal{N}_{\lambda}$, we have
$$\|u\|^{2}_{E_{\lambda}}=\int_{V}|u|^{p}d\mu.$$
By Lemma \ref{e}, we have
 $$\|u\|^{2}_{E_{\lambda}}=\|u\|^{p}_{p}\leq\eta^{p}_{p}\|u\|^{p}_{E_{\lambda}}.$$
Since $p>2$, we get
$$
\|u\|_{E_{\lambda}}\geq\left(\frac{1}{\eta_{p}}\right)^{\frac{p}{p-2}}>0,
$$
which gives that
$$
m_{\lambda}=\inf_{u\in \mathcal{N}_{\lambda}}J_{\lambda}(u)=(\frac{1}{2}-\frac{1}{p})\inf_{u\in \mathcal{N}_{\lambda}}\|u\|^{2}_{E_{\lambda}}\geq\left(\frac{1}{2}-\frac{1}{p}\right)\left(\frac{1}{\eta_{p}}\right)^{\frac{2p}{p-2}}>0.
$$
\end{proof}

\begin{lemma}\label{j}
$m_{\lambda}$ can be achieved by some $u_{\lambda}\in \mathcal{N}_{\lambda}$.
\end{lemma}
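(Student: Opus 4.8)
The plan is to run the direct method on the Nehari manifold: take a minimizing sequence, extract a limit using the compactness provided by Lemma \ref{e}, and then rescale the limit so that it lands on $\mathcal N_\lambda$ without raising the energy. The key identity throughout is that on $\mathcal N_\lambda$ one has $J_\lambda(u)=\big(\tfrac12-\tfrac1p\big)\|u\|^2_{E_\lambda}$, so minimizing $J_\lambda$ over $\mathcal N_\lambda$ is the same as minimizing $\|\cdot\|^2_{E_\lambda}$.

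First I would pick $\{u_k\}\subset\mathcal N_\lambda$ with $J_\lambda(u_k)\to m_\lambda$. Since $\|u_k\|^2_{E_\lambda}=\int_V|u_k|^p\,d\mu$ and $J_\lambda(u_k)=\big(\tfrac12-\tfrac1p\big)\|u_k\|^2_{E_\lambda}$ with $p>2$, the sequence $\{u_k\}$ is bounded in $E_\lambda$. By Lemma \ref{e}, after passing to a subsequence there is $u_0\in E_\lambda$ with $u_k\rightharpoonup u_0$ in $E_\lambda$, $u_k(x)\to u_0(x)$ for every $x\in V$, and $u_k\to u_0$ strongly in $L^q(V)$ for all $q\in[2,+\infty]$. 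Moreover $u_0\not\equiv 0$: by Lemma \ref{i} every element of $\mathcal N_\lambda$ has $E_\lambda$-norm bounded below by $(1/\eta_p)^{p/(p-2)}$, so $\int_V|u_k|^p\,d\mu=\|u_k\|^2_{E_\lambda}\ge(1/\eta_p)^{2p/(p-2)}>0$, and the strong $L^p$-convergence then gives $\int_V|u_0|^p\,d\mu\ge(1/\eta_p)^{2p/(p-2)}>0$.

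Next I would project $u_0$ onto the Nehari manifold. Arguing as in Lemma \ref{h}, there is $t_0>0$ with $t_0u_0\in\mathcal N_\lambda$, necessarily $t_0^{p-2}=\|u_0\|^2_{E_\lambda}\big/\int_V|u_0|^p\,d\mu$. Weak lower semicontinuity of the $E_\lambda$-norm gives $\|u_0\|^2_{E_\lambda}\le\liminf_k\|u_k\|^2_{E_\lambda}$, while strong $L^p$-convergence gives $\int_V|u_0|^p\,d\mu=\lim_k\int_V|u_k|^p\,d\mu=\lim_k\|u_k\|^2_{E_\lambda}$; combining these yields $\|u_0\|^2_{E_\lambda}\le\int_V|u_0|^p\,d\mu$, i.e. $t_0\le 1$. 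Hence
\begin{align*}
J_\lambda(t_0u_0)&=\big(\tfrac12-\tfrac1p\big)t_0^2\|u_0\|^2_{E_\lambda}\le\big(\tfrac12-\tfrac1p\big)\|u_0\|^2_{E_\lambda}\\
&\le\big(\tfrac12-\tfrac1p\big)\liminf_k\|u_k\|^2_{E_\lambda}=\liminf_k J_\lambda(u_k)=m_\lambda .
\end{align*}
Since $t_0u_0\in\mathcal N_\lambda$ forces $J_\lambda(t_0u_0)\ge m_\lambda$, we conclude $J_\lambda(t_0u_0)=m_\lambda$, so $u_\lambda:=t_0u_0$ achieves the infimum. (In fact every inequality above must then be an equality, which gives $t_0=1$, $u_0\in\mathcal N_\lambda$ itself, and $\|u_k\|_{E_\lambda}\to\|u_0\|_{E_\lambda}$, hence $u_k\to u_0$ strongly in $E_\lambda$.)

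The only genuinely nontrivial point is passing to the limit in $\int_V|u_k|^p\,d\mu$, which requires strong convergence rather than mere weak convergence; this is exactly where the compact-embedding conclusion of Lemma \ref{e}, and thus the coercivity hypothesis $(A_2)$ on $a$, is essential. Everything else — the scaling onto $\mathcal N_\lambda$, the bound $t_0\le1$, and the energy comparison — is routine bookkeeping of the Nehari method once the identity $J_\lambda=\big(\tfrac12-\tfrac1p\big)\|\cdot\|^2_{E_\lambda}$ on $\mathcal N_\lambda$ and the uniform lower bound from Lemma \ref{i} are in hand.
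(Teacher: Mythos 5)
Your proof is correct and follows essentially the same route as the paper: a bounded minimizing sequence, the compactness from Lemma \ref{e}, weak lower semicontinuity of the $E_{\lambda}$-norm against strong $L^{p}$-convergence, and a rescaling $t_0\le 1$ onto $\mathcal{N}_{\lambda}$. The only cosmetic difference is that the paper phrases the final step as a contradiction (strict inequality $\|u_{\lambda}\|^{2}_{E_{\lambda}}<\|u_{\lambda}\|^{p}_{p}$ would force $m_{\lambda}<m_{\lambda}$), whereas you run the inequality chain directly and note a posteriori that $t_0=1$; you are also slightly more careful in checking $u_0\not\equiv 0$ via Lemma \ref{i}.
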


\begin{proof}
Take a sequence $\{u_{k}\}\subset \mathcal{N}_{\lambda}$ such that $\underset{k\rightarrow +\infty}\lim J_{\lambda}(u_{k})=m_{\lambda}.$ We claim that $\{u_{k}\}$ is bounded in $E_{\lambda}$.
In fact, $\{u_{k}\}\subset \mathcal{N}_{\lambda}$ and $\underset{k\rightarrow +\infty}\lim J_{\lambda}(u_{k})=m_{\lambda}$ are equivalent to
$$
J^{'}_{\lambda}(u_{k})u_{k}=\|u_{k}\|^{2}_{E_{\lambda}}-\|u_{k}\|^{p}_{p}=0,
$$
$$
\frac{1}{2}\|u_{k}\|^{2}_{E_{\lambda}}-\frac{1}{p}\|u_{k}\|^{p}_{p}=m_{\lambda}+o_{k}(1).
$$
Then we have
\begin{equation}\label{ukbound}
m_{\lambda}+o_{k}(1)=(\frac{1}{2}-\frac{1}{p})\|u_{k}\|^{2}_{E_{\lambda}},
\end{equation}
Since $p>2$, \eqref{ukbound} implies that $\{u_{k}\}$ is bounded in $E_{\lambda}$.

Lemma \ref{e} tells us that there exists some $u\in E_{\lambda}$ such that, up to a subsequence,
$$\left\{\aligned&u_{k} \rightharpoonup u_{\lambda}  &\hbox{in} \ \ E_{\lambda};\\
&u_{k}(x)\rightarrow u_{\lambda}(x) &\forall x\in V;\\
&u_{k} \rightarrow u_{\lambda} &\hbox{in}\ \ L^{p}(V),
\endaligned\right.$$
as $k\rightarrow +\infty$. By weak lower semi-continuity of the norm for $E_{\lambda}$ and convergence of $u_{k}$ to $u_{\lambda}$ in $L^{p}(V )$, we have
\begin{eqnarray}\label{ulambdaconvergence} J_{\lambda}(u_{\lambda})
&=&\frac{1}{2}\|u_{\lambda}\|^{2}_{E_{\lambda}}-\frac{1}{p}\|u_{\lambda}\|^{p}_{p}
\nonumber\\
&\leq&\underset{k\rightarrow +\infty}\liminf(\frac{1}{2}\|u_{k}\|^{2}_{E_{\lambda}}-\frac{1}{p}\|u_{k}\|^{p}_{p})
\nonumber\\
&=&\underset{k\rightarrow +\infty}\liminf J_{\lambda}(u_{k})\nonumber\\
&=&m_{\lambda}.
\end{eqnarray}
Now, we only need to show that $u_{\lambda}\in\mathcal{N}_{\lambda}$. Up to a subsequence, we can assume that $\|u_{k}\|^{2}_{E_{\lambda}}\rightarrow C>0$ for some positive constant $C$ as $k\rightarrow \infty.$ This together with $\|u_{k}\|^{2}_{E_{\lambda}}=\|u_{k}\|^{p}_{p}$ gives that
$$
\|u_{\lambda}\|^{p}_{p}=\underset{k\rightarrow \infty}\lim\|u_{k}\|^{p}_{p}=C.
$$
Noticing that $u_{k}\in\mathcal{N}_{\lambda}$, we have
$$
\|u_{\lambda}\|^{2}_{E_{\lambda}}
\leq\underset{k\rightarrow\infty}\liminf\|u_{k}\|^{2}_{E_{\lambda}}
=\underset{k\rightarrow\infty}\liminf\|u_{k}\|^{p}_{p}=\|u_{\lambda}\|^{p}_{p}.
$$
Suppose that $\|u_{\lambda}\|^{2}_{E_{\lambda}}<\|u_{\lambda}\|^{p}_{p}$. By similar arguments as in Lemma \ref{h}, we know that there exists some $t\in (0,1)$ such that $tu_{\lambda}\in\mathcal{N}_{\lambda}$. This gives that
\begin{eqnarray*}
0<m_{\lambda}\leq J_{\lambda}(tu_{\lambda})
&=&(\frac{1}{2}-\frac{1}{p})\|tu_{\lambda}\|^{2}_{E_{\lambda}}\\
&=&t^{2}(\frac{1}{2}-\frac{1}{p})\|u_{\lambda}\|^{2}_{E_{\lambda}}\\
&\leq& t^{2}\underset{k\rightarrow\infty}\liminf(\frac{1}{2}-\frac{1}{p})\|u_{k}\|^{2}_{E_{\lambda}}\\
&=&t^{2}\underset{k\rightarrow\infty}\liminf J_{\lambda}(u_{k})\\
&=&t^{2}m_{\lambda}<m_{\lambda},
\end{eqnarray*}
which is a contradiction with the fact that $m_{\lambda}=\underset{u\in \mathcal{N}_{\lambda}}\inf J_{\lambda}(u)$. Therefore we have $\|u_{\lambda}\|^{2}_{E_{\lambda}}=\|u_{\lambda}\|^{p}_{p}$ and $u_{\lambda}\in\mathcal{N}_{\lambda}$. This together with \eqref{ulambdaconvergence} gives that $m_{\lambda}$ is achieved by $u_{\lambda}.$
\end{proof}

The next lemma completes the proof of Theorem \ref{existence1}.

\begin{lemma}\label{k}
$u_{\lambda}\in\mathcal{N}_{\lambda}$ is a ground state solution of \eqref{equation}.
\end{lemma}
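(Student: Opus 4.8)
The plan is to show that the minimizer $u_\lambda$ produced in Lemma \ref{j} is in fact a critical point of $J_\lambda$, since by definition a ground state solution is a critical point of $J_\lambda$ lying on $\mathcal N_\lambda$ with least energy, and Lemma \ref{j} already gives $J_\lambda(u_\lambda)=m_\lambda=\inf_{\mathcal N_\lambda}J_\lambda$. The standard route is a Lagrange‑multiplier argument on the Nehari manifold. Set $G_\lambda(u):=J'_\lambda(u)u=\|u\|_{E_\lambda}^2-\|u\|_p^p$, so that $\mathcal N_\lambda=\{u\neq 0: G_\lambda(u)=0\}$. First I would check that $G_\lambda\in C^1(E_\lambda,\mathbb R)$ and that $\mathcal N_\lambda$ is a $C^1$ manifold near $u_\lambda$: compute $G'_\lambda(u_\lambda)u_\lambda=2\|u_\lambda\|_{E_\lambda}^2-p\|u_\lambda\|_p^p=(2-p)\|u_\lambda\|_{E_\lambda}^2<0$ using $u_\lambda\in\mathcal N_\lambda$ and $\|u_\lambda\|_{E_\lambda}>0$ (the latter from Lemma \ref{i}), so $G'_\lambda(u_\lambda)\neq 0$.

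Since $u_\lambda$ minimizes $J_\lambda$ on $\mathcal N_\lambda$, there is a multiplier $\theta\in\mathbb R$ with $J'_\lambda(u_\lambda)=\theta\,G'_\lambda(u_\lambda)$ in $E_\lambda^{*}$. Testing this identity against $u_\lambda$ gives $J'_\lambda(u_\lambda)u_\lambda=\theta\,G'_\lambda(u_\lambda)u_\lambda$; the left side is $0$ because $u_\lambda\in\mathcal N_\lambda$, and the right side is $\theta\,(2-p)\|u_\lambda\|_{E_\lambda}^2$ with $(2-p)\|u_\lambda\|_{E_\lambda}^2\neq 0$, forcing $\theta=0$. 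Hence $J'_\lambda(u_\lambda)=0$, i.e. $u_\lambda$ is a weak solution of \eqref{equation} in the sense of Definition \ref{defofws}. Then Lemma \ref{g} upgrades $u_\lambda$ to a pointwise solution of \eqref{equation}. Together with $J_\lambda(u_\lambda)=m_\lambda$ and $u_\lambda\in\mathcal N_\lambda$, this shows $u_\lambda$ is a ground state solution, completing the proof of Theorem \ref{existence1}.

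Alternatively, and perhaps more in the elementary spirit of the paper, one can avoid the manifold machinery: for $v\in E_\lambda$ fixed, consider for small $(s,t)$ the map $(s,t)\mapsto J_\lambda\big(t(u_\lambda+sv)\big)$. For each $s$ near $0$, since $u_\lambda+sv\neq 0$, the function $t\mapsto J_\lambda(t(u_\lambda+sv))$ has (as in Lemma \ref{h}) a unique positive maximum at some $t(s)$ with $t(0)=1$, and $t(s)$ is $C^1$ in $s$ by the implicit function theorem applied to $\partial_t J_\lambda(t(u_\lambda+sv))=0$ (the nondegeneracy is again $(2-p)\|u_\lambda\|_{E_\lambda}^2\neq 0$). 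Then $s\mapsto J_\lambda(t(s)(u_\lambda+sv))$ has a minimum at $s=0$ among points of $\mathcal N_\lambda$, and differentiating at $s=0$, the $t'(0)$ term drops out because $\partial_t J_\lambda$ vanishes at $t=1$, leaving $J'_\lambda(u_\lambda)v=0$; since $v$ is arbitrary, $J'_\lambda(u_\lambda)=0$ and we conclude as before via Lemma \ref{g}.

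The only genuinely delicate point is the nondegeneracy that makes either argument run, namely $G'_\lambda(u_\lambda)u_\lambda\neq 0$; this is exactly where $p>2$ and the lower bound $\|u_\lambda\|_{E_\lambda}\geq(1/\eta_p)^{p/(p-2)}>0$ from Lemma \ref{i} are used, and everything else is routine. I expect no further obstacles.
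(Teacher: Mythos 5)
Your proposal is correct, and your ``alternative'' argument is essentially the paper's own proof: the authors also fix $\phi$, put $t(s)(u_\lambda+s\phi)$ on $\mathcal N_\lambda$, note $t(0)=1$, and differentiate $\gamma(s)=J_\lambda(t(s)(u_\lambda+s\phi))$ at its minimum $s=0$, with the $t'(0)$ term killed by $J'_\lambda(u_\lambda)u_\lambda=0$. The only cosmetic difference is that the paper writes $t(s)=\bigl(\|u_\lambda+s\phi\|_{E_\lambda}^2/\|u_\lambda+s\phi\|_p^p\bigr)^{1/(p-2)}$ explicitly (so smoothness of $t(s)$ is immediate), whereas you invoke the implicit function theorem; your explicit identification of the nondegeneracy $G'_\lambda(u_\lambda)u_\lambda=(2-p)\|u_\lambda\|_{E_\lambda}^2\neq 0$ is exactly what makes that step legitimate and is left tacit in the paper. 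Your first route, the Lagrange multiplier argument on the $C^1$ manifold $\mathcal N_\lambda$, is a standard equivalent alternative that the paper does not use; it buys a cleaner conceptual statement at the cost of quoting the Banach-space multiplier theorem, while the fibering argument is self-contained. Both routes reduce to the same two facts ($p>2$ and $\|u_\lambda\|_{E_\lambda}>0$ from Lemma \ref{i}), so there is no gap.
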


\begin{proof}
We shall prove that for any $\phi\in C_{c}(V)$, there holds
$$J_{\lambda}^{'}(u_{\lambda})\phi=0.$$
Since $u_\lambda\not\equiv 0$, we can choose a constant $\epsilon>0$ such that $u_{\lambda}+s\phi\not\equiv 0$ for any $s\in (-\epsilon,\epsilon)$. Furthermore, for any $s\in (-\epsilon,\epsilon)$, there exists some $t(s)\in (0,\infty)$, such that $t(s)(u_{\lambda}+s\phi)\in\mathcal{N}_{\lambda}$. In fact, $t(s)$ can be taken as
$$
t(s)=\left(\frac{\|u_{\lambda}+s\phi\|^{2}_{E_{\lambda}}}
{\|u_{\lambda}+s\phi\|^{p}_{p}}\right)^{\frac{1}{p-2}},
$$
and in particular, we have $t(0)=1$. Define a function $\gamma(s):(-\epsilon,\epsilon)\rightarrow\mathbb{R}$ as
$$
\gamma(s):=J_{\lambda}(t(s)(u_{\lambda}+s\phi)).
$$
Since $t(s)(u_{\lambda}+s\phi)\in\mathcal{N}_{\lambda}$ and $J_{\lambda}(u_{\lambda})=\underset{u\in \mathcal{N}_{\lambda}}\inf J_{\lambda}(u)$, $\gamma(s)$ achieves its minimum at $s=0$. This implies that
\begin{eqnarray*} 0=\gamma^{'}(0)
&=&J_{\lambda}^{'}(t(0)u_{\lambda})\cdot[t^{'}(0)u_{\lambda}+t(0)\phi]\\
&=&J_{\lambda}^{'}(u_{\lambda})\cdot t^{'}(0)u_{\lambda}+J_{\lambda}^{'}(u_{\lambda})\cdot\phi\\
&=&J_{\lambda}^{'}(u_{\lambda})\cdot\phi,
\end{eqnarray*}
where in the last equality, we have used the fact that $u_{\lambda}\in\mathcal{N}_{\lambda}$  and $J_{\lambda}^{'}(u_{\lambda})\cdot u_{\lambda}=0$.
\end{proof}

\section{Convergence of the ground state solutions}

In this section, we prove that the ground state solutions $u_{\lambda}$  of \eqref{equation} converge to a ground state solution of \eqref{dirichlet} as $\lambda\rightarrow +\infty$, which also implies Theorem \ref{existence2}.

\begin{lemma}\label{nu}
There exists a constant $\nu>0$ which is independent of $\lambda$, such that for any critical point $u\in E_{\lambda}\backslash\{0\}$ of $J_{\lambda}$, we have $\|u\|_{E_{\lambda}}\geq\nu$.
\end{lemma}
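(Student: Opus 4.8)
The plan is to exploit the variational characterization of critical points together with the $\lambda$-independent Sobolev embedding from Lemma~\ref{e}. Let $u\in E_\lambda\setminus\{0\}$ be a critical point of $J_\lambda$, so $J'_\lambda(u)=0$ in $E_\lambda^*$. Testing the Euler--Lagrange identity \eqref{frechet} with $\phi=u$ gives
\begin{equation*}
\|u\|_{E_\lambda}^2=J'_\lambda(u)u+\int_V|u|^p\,d\mu=\int_V|u|^p\,d\mu=\|u\|_{p,V}^p.
\end{equation*}
Thus every nonzero critical point automatically lies on the Nehari manifold $\mathcal N_\lambda$, and the estimate we want is really just the lower bound on $\|u\|_{E_\lambda}$ for points of $\mathcal N_\lambda$ that was already obtained inside the proof of Lemma~\ref{i}.

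First I would invoke Lemma~\ref{e} with $q=p$: there is a constant $\eta_p$, independent of $\lambda$, with $\|u\|_{p,V}\le\eta_p\|u\|_{E_\lambda}$ for all $u\in E_\lambda$. Combining this with the identity above yields
\begin{equation*}
\|u\|_{E_\lambda}^2=\|u\|_{p,V}^p\le\eta_p^{\,p}\,\|u\|_{E_\lambda}^p .
\end{equation*}
Since $u\neq0$ we may divide by $\|u\|_{E_\lambda}^2>0$ and use $p>2$ to solve for $\|u\|_{E_\lambda}$, obtaining
\begin{equation*}
\|u\|_{E_\lambda}\ge\left(\frac{1}{\eta_p}\right)^{\frac{p}{p-2}}=:\nu .
\end{equation*}
Because $\eta_p$ depends only on $p$ (and in particular not on $\lambda$), the constant $\nu$ is independent of $\lambda$, which is exactly the claim.

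There is essentially no obstacle here: the only nontrivial ingredient is the $\lambda$-uniformity of the embedding constant $\eta_p$, and that is precisely what Lemma~\ref{e} provides (it traces back to the pointwise bound $u(x_0)^2\le\mu_{\min}^{-1}\|u\|_{E_\lambda}^2$, which does not see $\lambda$). The one point worth stating carefully is why a critical point must satisfy $J'_\lambda(u)u=0$ with $u$ itself as an admissible test function — this is immediate since $u\in E_\lambda$ and $J'_\lambda(u)$ annihilates all of $E_\lambda$. Everything else is the short computation above, so I would present the lemma's proof in just a few lines, essentially reproducing the chain of inequalities from Lemma~\ref{i} but starting from the critical point equation rather than from membership in $\mathcal N_\lambda$.
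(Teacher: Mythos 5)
Your proof is correct and follows the same route as the paper: test the critical point equation with $u$ itself to get $\|u\|_{E_\lambda}^2=\|u\|_{p,V}^p$, apply the $\lambda$-independent embedding constant $\eta_p$ from Lemma~\ref{e}, and solve for $\|u\|_{E_\lambda}$ using $p>2$ to obtain $\nu=(1/\eta_p)^{p/(p-2)}$. This matches the paper's argument exactly, including the value of $\nu$.
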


\begin{proof}
Since $u$ is a critical point of $J_{\lambda}$, we have
\begin{eqnarray*}
0=J'_{\lambda}(u)u&=&\|u\|^{2}_{E_{\lambda}}-\int_{V}|u|^{p}d\mu\\
&\geq&\|u\|^{2}_{E_{\lambda}}-\eta^{p}_{p}\|u\|^{p}_{E_{\lambda}},
\end{eqnarray*}
where we have used Lemma \ref{e} to get the inequality.
Then we can choose $\nu=\left(\frac{1}{\eta_{p}}\right)^{\frac{p}{p-2}}$ and the lemma is proved.
\end{proof}

Next, we prove a lemma for the $(PS)_{c}$ sequence of $J_{\lambda}$.

\begin{lemma}\label{pslevel}
For any $(PS)_{c}$ sequence $\{u_{k}\}$ of $J_{\lambda}$, there holds
\begin{equation}\label{level}
\underset{k\rightarrow +\infty}\lim \|u_{k}\|^{2}_{E_{\lambda}}=\frac{2p }{p-2}c.
\end{equation}
Moreover, there exists a positive constant $C_{1}>0$ which is independent of $\lambda$, such that either $c>C_1$ or $c=0$.
\end{lemma}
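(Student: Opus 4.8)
The plan is to exploit the two defining properties of a $(PS)_c$ sequence, namely $J_\lambda(u_k)\to c$ and $J'_\lambda(u_k)\to 0$ in $E_\lambda^*$, and combine them algebraically. First I would write out $J_\lambda(u_k)=\frac12\|u_k\|_{E_\lambda}^2-\frac1p\|u_k\|_p^p$ and $J'_\lambda(u_k)u_k=\|u_k\|_{E_\lambda}^2-\|u_k\|_p^p$. The standard observation is that
\[
J_\lambda(u_k)-\frac1p J'_\lambda(u_k)u_k=\left(\frac12-\frac1p\right)\|u_k\|_{E_\lambda}^2.
\]
Since $J'_\lambda(u_k)\to 0$ in $E_\lambda^*$, one first needs $\{u_k\}$ to be bounded in $E_\lambda$ so that $J'_\lambda(u_k)u_k\to 0$; this follows from the displayed identity itself by a routine bootstrap (if $\|u_k\|_{E_\lambda}\to\infty$ along a subsequence, the left side is $c+o(1)+o(1)\|u_k\|_{E_\lambda}$ while the right side grows quadratically, a contradiction). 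Once boundedness is in hand, $|J'_\lambda(u_k)u_k|\le \|J'_\lambda(u_k)\|_{E_\lambda^*}\|u_k\|_{E_\lambda}\to 0$, and passing to the limit in the identity gives $\left(\frac12-\frac1p\right)\lim\|u_k\|_{E_\lambda}^2=c$, i.e. $\lim\|u_k\|_{E_\lambda}^2=\frac{2p}{p-2}c$. This proves \eqref{level}.

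For the dichotomy, I would argue by cases on whether $c=0$. Suppose $c\neq 0$; then by \eqref{level}, $\|u_k\|_{E_\lambda}^2\to\frac{2p}{p-2}c$, and since this limit must be a nonnegative number we in fact have $c>0$ and $\|u_k\|_{E_\lambda}$ is bounded away from $0$ for $k$ large. Now I would feed this back into $J'_\lambda(u_k)u_k=\|u_k\|_{E_\lambda}^2-\|u_k\|_p^p=o(1)$ together with the embedding inequality $\|u_k\|_p\le\eta_p\|u_k\|_{E_\lambda}$ from Lemma \ref{e}, which gives
\[
\|u_k\|_{E_\lambda}^2=\|u_k\|_p^p+o(1)\le\eta_p^p\|u_k\|_{E_\lambda}^p+o(1).
\]
Dividing by $\|u_k\|_{E_\lambda}^2$ (legitimate since it is bounded below) and letting $k\to\infty$ yields $1\le\eta_p^p\left(\frac{2p}{p-2}c\right)^{\frac{p-2}{2}}$, hence
\[
c\ge\frac{p-2}{2p}\left(\frac{1}{\eta_p}\right)^{\frac{2p}{p-2}}=:C_1,
\]
a positive constant depending only on $p$ and the embedding constant $\eta_p$, both of which are independent of $\lambda$ by Lemma \ref{e}. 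Strictly this gives $c\ge C_1$; to get the stated strict inequality $c>C_1$ one can simply shrink $C_1$ by a harmless factor (e.g. replace $C_1$ by $C_1/2$), or note that the dichotomy is really "$c\ge C_1$ or $c=0$", which is what is used later.

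I do not expect a genuine obstacle here; the only point requiring a little care is the boundedness of the $(PS)_c$ sequence, which is where one must be careful not to circularly assume what one is proving, but the quadratic-versus-linear growth argument resolves it cleanly. The rest is elementary manipulation of the two functional identities together with the $\lambda$-independent Sobolev embedding of Lemma \ref{e}; the whole content of the lemma is that the energy level $c$ is quantized away from zero by a barrier that does not degenerate as $\lambda\to+\infty$, which is exactly what the embedding constant's $\lambda$-independence provides.
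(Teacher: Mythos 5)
Your proof is correct and follows essentially the same route as the paper: both parts rest on the identity $J_\lambda(u_k)-\tfrac1p J'_\lambda(u_k)u_k=\left(\tfrac12-\tfrac1p\right)\|u_k\|_{E_\lambda}^2$ together with the $\lambda$-independent embedding $\|u\|_p\le\eta_p\|u\|_{E_\lambda}$ of Lemma \ref{e}, the only difference being that the paper phrases the dichotomy contrapositively (if $c<C_1$ then $\|u_k\|_{E_\lambda}\to 0$ and $c=0$, via the small-ball estimate $J'_\lambda(u)u\ge\tfrac12\|u\|_{E_\lambda}^2$ for $\|u\|_{E_\lambda}\le\rho$) while you argue directly from $c\neq 0$. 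Your explicit verification that the $(PS)_c$ sequence is bounded, and your observation that the argument really yields $c\ge C_1$ rather than $c>C_1$, are both points the paper glosses over; your constant $C_1$ differs from the paper's only by a harmless factor.
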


\begin{proof}
Since $J_{\lambda}(u_{k})\rightarrow c$ and $J'_{\lambda}(u_{k})\rightarrow 0$ as $k\rightarrow +\infty$, we have
\begin{eqnarray*}
c&=&\underset{k\rightarrow +\infty}\lim(J_{\lambda}(u_{k})-\frac{1}{p}J'_{\lambda}(u_{k})u_{k})\\
&=&\underset{k\rightarrow +\infty}\lim(\frac{1}{2}-\frac{1}{p})\|u_{k}\|^{2}_{E_{\lambda}}\\
&=&\frac{p-2}{2p }\underset{k\rightarrow +\infty}\lim\|u_{k}\|^{2}_{E_{\lambda}},
\end{eqnarray*}
which gives \eqref{level}.

By Lemma \ref{e}, for any $u\in E_{\lambda}$, we have
\begin{equation}\label{level1}
J^{'}_{\lambda}(u)u=\|u\|^{2}_{E_{\lambda}}-\|u\|^{p}_{p}\geq \|u\|^{2}_{E_{\lambda}}-\eta^{p}_{p}\|u\|^{p}_{E_{\lambda}}.
\end{equation}
Take $\rho=(\frac{1}{2\eta^{p}_{p}})^{\frac{1}{p-2}}$. If $\|u\|_{E_{\lambda}}\leq\rho$, we get from \eqref{level1}
\begin{equation}\label{level2}
J'_{\lambda}(u)u\geq \frac{1}{2}\|u\|^{2}_{E_{\lambda}}.
\end{equation}
Take $C_{1}=\frac{p-2}{2p}\rho^{2}$ and suppose $c<C_{1}$. Since $\{u_{k}\}$ is a $(PS)_{c}$ sequence, \eqref{level} gives
$$\underset{k\rightarrow +\infty}\lim \|u_ {k}\|^{2}_{E_{\lambda}}=\frac{2p }{p-2}c<\frac{2p }{p-2}C_{1}=\rho^{2}.$$
From \eqref{level2}, we have that for large $k$, there holds
$$\frac{1}{2}\|u_{k}\|^{2}_{E_{\lambda}}\leq J'_{\lambda}(u_{k})u_{k}=o_{k}(1)\|u_{k}\|_{E_{\lambda}},$$
which implies that $\|u_{k}\|_{E_{\lambda}}\rightarrow 0$ as $k\rightarrow +\infty$. It follows immediately that $J_{\lambda}(u_{k})\rightarrow c=0$ and the desired results are proved for $C_{1}=\frac{p-2}{2p}\rho^{2}=\frac{p-2}{2p}(\frac{1}{2\eta^{p+1}_{p+1}})^{\frac{2}{p-2}}.$
\end{proof}

\begin{remark}\label{psbound}
By the proof of the existence of a ground state solutions $u_{\lambda}$, we know that there exists a $(PS)_{c}$ sequence $\{u_{k}\}$ converges weakly to $u_{\lambda}$ in $E_{\lambda}$ with $c=m_{\lambda}$. By weak lower semi-continuity of the norm $\|\cdot\|_{E_{\lambda}}$, we get that $\|u_{\lambda}\|_{E_{\lambda}}$ is bounded by $\frac{2p m_{\lambda}}{p-2}.$
\end{remark}

For the ground states $m_{\lambda}$ and $m_{\Omega}$, we have

\begin{lemma}\label{groundstate}
$m_{\lambda}\rightarrow m_{\Omega}$ as $\lambda\rightarrow\infty.$
\end{lemma}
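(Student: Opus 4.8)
The plan is to prove the two inequalities $\limsup_{\lambda\to\infty} m_\lambda \le m_\Omega$ and $\liminf_{\lambda\to\infty} m_\lambda \ge m_\Omega$ separately, using the monotonicity of $m_\lambda$ in $\lambda$ as an intermediate organizing tool. First I would observe that if $\lambda_1 < \lambda_2$ then $\|u\|_{E_{\lambda_1}} \le \|u\|_{E_{\lambda_2}}$ for every $u$, hence $J_{\lambda_1}(u) \le J_{\lambda_2}(u)$, and since the Nehari parametrization $t\mapsto tu$ depends monotonically on the norm, one gets $m_{\lambda_1} \le m_{\lambda_2}$. Together with Lemma \ref{i} this shows $\{m_\lambda\}$ is nondecreasing and bounded below by a positive constant, so $\lim_{\lambda\to\infty} m_\lambda$ exists; call it $\bar m$. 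It remains to identify $\bar m = m_\Omega$.

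For the upper bound $m_\lambda \le m_\Omega$ for all $\lambda$: take the ground state $u_0 \in \mathcal N_\Omega$ of \eqref{dirichlet} from Theorem \ref{existence2}, extend it by zero to all of $V$ (legitimate since $u_0 \in C_c(\Omega)$ and hence lies in $W^{1,2}_0(\Omega) \subset W^{2,2}(V)$; one must check the extension has the same $\Delta$, $\nabla$ contributions, which holds because $u_0$ and its neighbors vanish on $V\setminus(\Omega\cup\partial\Omega)$ and $a\equiv 0$ on $\Omega$, so the $\lambda a u_0^2$ term contributes nothing). Then $\|u_0\|_{E_\lambda}^2 = \|u_0\|_{H(\Omega)}^2$ and $\|u_0\|_{p,V}^p = \|u_0\|_{p,\Omega}^p$, so $u_0 \in \mathcal N_\lambda$ and $J_\lambda(u_0) = J_\Omega(u_0) = m_\Omega$, giving $m_\lambda \le m_\Omega$. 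Hence $\bar m \le m_\Omega$.

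For the lower bound I would argue by contradiction, or directly: pick ground states $u_\lambda \in \mathcal N_\lambda$ with $J_\lambda(u_\lambda) = m_\lambda \le m_\Omega$. By Lemma \ref{pslevel} (or \eqref{ukbound}) the sequence $\|u_\lambda\|_{E_\lambda}^2 = \frac{2p}{p-2} m_\lambda$ is bounded uniformly in $\lambda$, so in particular $u_\lambda$ is bounded in $W^{2,2}(V)$. Along a sequence $\lambda_k\to\infty$, extract (via a diagonal/pointwise argument as in Lemma \ref{e}) a pointwise limit $v$, which is also a weak $W^{2,2}(V)$ limit. The key point is that $\int_V \lambda_k a\, u_{\lambda_k}^2\, d\mu \le \|u_{\lambda_k}\|_{E_{\lambda_k}}^2$ is bounded while $\lambda_k\to\infty$ forces $\int_V a\, u_{\lambda_k}^2\, d\mu \to 0$, and by Fatou/pointwise convergence $a\, v^2 \equiv 0$, i.e. $v$ is supported in $\Omega$; since $v(x)$ must vanish at every vertex where $a>0$, in particular on $\partial\Omega$, we get $v \in H(\Omega)$. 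Using weak lower semicontinuity of the relevant quadratic forms on $\Omega\cup\partial\Omega$ and strong $L^p$ convergence, one shows $v \ne 0$ (using Lemma \ref{nu}: $\|u_{\lambda_k}\|_{E_{\lambda_k}} \ge \nu > 0$ and the $L^p$ lower bound transfers), that $\|v\|_{H(\Omega)}^2 \le \liminf \|u_{\lambda_k}\|_{E_{\lambda_k}}^2$ while $\|v\|_{p,\Omega}^p = \lim \|u_{\lambda_k}\|_{p,V}^p$, so after a Nehari rescaling $tv \in \mathcal N_\Omega$ with $t \le 1$, whence $m_\Omega \le J_\Omega(tv) = t^2(\tfrac12-\tfrac1p)\|v\|_{H(\Omega)}^2 \le (\tfrac12-\tfrac1p)\liminf\|u_{\lambda_k}\|_{E_{\lambda_k}}^2 = \liminf m_{\lambda_k} = \bar m$. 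Combined with $\bar m \le m_\Omega$ this yields $\bar m = m_\Omega$.

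The main obstacle is the lower-bound direction: specifically, justifying that the uniform $W^{2,2}(V)$ bound plus the boundedness of $\int_V \lambda_k a\, u_{\lambda_k}^2\,d\mu$ actually forces the weak limit to be concentrated on $\Omega$ and to vanish on $\partial\Omega$, and then controlling the behavior of $\Delta u_{\lambda_k}$ near $\partial\Omega$ well enough to pass the quadratic form $\int_{\Omega\cup\partial\Omega}|\Delta u|^2 d\mu$ to the limit with the correct (lower-semicontinuity) inequality — the discreteness helps here since $\Omega\cup\partial\Omega$ is finite, so once pointwise convergence on this finite set is established the passage to the limit is essentially algebraic. Ensuring $v\not\equiv 0$ (no loss of mass to infinity) is the other delicate point, handled exactly as in the proof of Lemma \ref{e} via the coercivity coming from $(A_2)$.
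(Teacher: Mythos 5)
Your argument for the lower bound $\liminf m_{\lambda}\geq m_{\Omega}$ is essentially the paper's own proof: uniform boundedness of $u_{\lambda_k}$ in $W^{2,2}(V)$ from Remark \ref{psbound}, extraction of a weak limit $v$ supported in $\Omega$ (the boundedness of $\lambda_k\int_V a\,u_{\lambda_k}^2\,d\mu$ versus the paper's pointwise contradiction at a single vertex are interchangeable here), non-vanishing via Lemma \ref{nu} and the strong $L^p$ convergence, weak lower semicontinuity on the finite set $\Omega\cup\partial\Omega$, and the Nehari rescaling $tv\in\mathcal{N}_{\Omega}$ with $t\leq 1$. That part is sound. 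The one genuine wrinkle is your upper bound: you invoke Theorem \ref{existence2} to produce a minimizer $u_{0}\in\mathcal{N}_{\Omega}$, but in this paper Theorem \ref{existence2} is \emph{deduced from} the convergence analysis that rests on this very lemma, so as written your argument is circular within the paper's logical ordering. The fix is immediate and is what the paper does: the zero-extension computation you carry out for $u_{0}$ (vanishing of $\Delta u$ and $|\nabla u|$ off $\Omega\cup\partial\Omega$, and $a\equiv 0$ on $\Omega$) shows that \emph{every} $u\in\mathcal{N}_{\Omega}$ lies in $\mathcal{N}_{\lambda}$ with $J_{\lambda}(u)=J_{\Omega}(u)$, hence $\mathcal{N}_{\Omega}\subset\mathcal{N}_{\lambda}$ and $m_{\lambda}\leq m_{\Omega}$ by taking infima — no minimizer needed. (Alternatively, existence for \eqref{dirichlet} could be proved independently first, since $H(\Omega)$ is finite dimensional, but that is not the paper's route.) Your monotonicity observation $m_{\lambda_1}\leq m_{\lambda_2}$ is correct via the fibering characterization $m_{\lambda}=\inf_{u}\max_{t>0}J_{\lambda}(tu)$ and gives existence of the limit cleanly, which is a small tidiness gain over the paper's subsequence bookkeeping, but it is not needed once both one-sided bounds are in place.
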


\begin{proof}
Since $\mathcal{N}_{\Omega}\subset\mathcal{N}_{\lambda}$, we obviously have that $m_{\lambda}\leq m_{\Omega}$ for any $\lambda>0$. Take a sequence $\lambda_{k}\rightarrow\infty$ such that
$$
\underset{k\rightarrow\infty}\lim m_{\lambda_{k}}=M\leq m_{\Omega},
$$
where $m_{\lambda_{k}}$ is the ground state and $u_{\lambda_{k}}\in\mathcal{N}_{\lambda_{k}}$ is the corresponding ground state solution. Lemma \ref{pslevel} tells us that $m_\Omega\geq M>0$.

By Remark \ref{psbound}, $\{u_{\lambda_{k}}\}$ is uniformly bounded in $W^{2,2}(V)$. Up to a subsequence, we can assume that there exists some $u_{0}\in W^{2,2}(V)$ such that
$$u_{\lambda_{k}}\rightharpoonup u_{0}\ \ \hbox{in}\ \ W^{2,2}(V)$$
and for any $q\in [2,+\infty)$,
\begin{equation}\label{lpconvergence}
u_{\lambda_{k}}\rightarrow u_{0}\ \ \hbox{in}\ \ L^{q}(V).
\end{equation}
We claim that $u_{0}|_{\Omega^{c}}=0.$ Otherwise, there exists a vertex $x_{0}\notin\Omega$ such that $u_{0}(x_{0})\neq 0$. Since $u_{\lambda_{k}}\in\mathcal{N}_{\lambda_{k}}$, we have
$$
J_{\lambda}(u_{\lambda_{k}})
=(\frac{1}{2}-\frac{1}{p})\|u_{\lambda_{k}}\|^{2}_{E_{\lambda_{k}}}
\geq\frac{p-2}{2p}\lambda_{k}\int_{V}a|u_{\lambda_{k}}|^{2}d\mu\\
\geq\frac{p-2}{2p}\lambda_{k}a(x_{0})|u_{\lambda_{k}}(x_{0})|^{2}\mu(x_{0}).
$$
Since $a(x_{0})>0$, $\mu(x_{0})\geq\mu_{\min}>0,$ $u_{\lambda_{k}}(x_{0})\rightarrow u_{0}(x_{0})\neq 0$ and $\lambda_{k}\rightarrow\infty$, we get
$$\underset{k\rightarrow\infty}\lim J_{\lambda_{k}}(u_{\lambda_{k}})=\infty,$$
which is a contradiction to the fact that $m_{\lambda_{k}}<m_{\Omega}.$

By weak lower semi-continuity of the norm $\|\cdot\|_{W^{2,2}(V)}$ and \eqref{lpconvergence}, we get
\begin{eqnarray*}
&&\int_{\Omega\cup\partial\Omega}(|\Delta u_{0}|^{2}+|\nabla u_{0}|^{2})d\mu+\int_{\Omega}|u_{0}|^{2}d\mu\\
&\leq&\int_{V}(|\Delta u_{0}|^{2}+|\nabla u_{0}|^{2}+|u_{0}|^{2})d\mu\\
&\leq&\underset{k\rightarrow+\infty}\liminf \int_{V}(|\Delta u_{\lambda_{k}}|^{2}+|\nabla u_{\lambda_{k}}|^{2}+|u_{\lambda_{k}}|^{2})d\mu\\
&\leq&\underset{k\rightarrow+\infty}\liminf \int_{V}(|\Delta u_{\lambda_{k}}|^{2}+|\nabla u_{\lambda_{k}}|^{2}+(\lambda_{k}a+1)|u_{\lambda_{k}}|^{2})d\mu\\
&=&\underset{k\rightarrow+\infty}\liminf\int_{V}|u_{\lambda_{k}}|^{p}d\mu\\
&=&\int_{V}|u_{0}|^{p}d\mu.
\end{eqnarray*}
Noticing that $u_{0}|_{\Omega^{c}}=0$, we get
$$\int_{\Omega\cup\partial\Omega}(|\Delta u_{0}|^{2}+|\nabla u_{0}|^{2})d\mu+\int_{\Omega}|u_{0}|^{2}d\mu\leq\int_{\Omega}|u_{0}|^{p}d\mu.$$
Then there exists $t\in(0,1]$ such that $tu_{0}\in\mathcal{N}_{\Omega}$, i.e.
$$\int_{\Omega\cup\partial\Omega}(|t\Delta u_{0}|^{2}+|t\nabla u_{0}|^{2})d\mu+\int_{\Omega}|tu_{0}|^{2}d\mu=\int_{\Omega}|tu_{0}|^{p}d\mu.$$
This implies that
\begin{eqnarray*}
J_{\Omega}(tu_{0})&=&
\frac{p-2}{2p}\int_{\Omega\cup\partial\Omega}(|t\Delta u_{0}|^{2}+|t\nabla u_{0}|^{2})d\mu+\int_{\Omega}|tu_{0}|^{2}d\mu\\
&\leq&\frac{p-2}{2p}\int_{V}(|t\Delta u_{0}|^{2}+|t\nabla u_{0}|^{2}+|tu_{0}|^{2})d\mu\\
&\leq&\underset{k\rightarrow\infty}\liminf\left[\frac{p-2}{2p}\int_{V}(|t\Delta u_{\lambda_{k}}|^{2}+|t\nabla u_{\lambda_{k}}|^{2}+(\lambda_{k}a+1)|tu_{\lambda_{k}}|^{2})d\mu\right]\\
&=&\underset{k\rightarrow\infty}\liminf J_{\lambda_{k}}(tu_{\lambda_{k}})\\
&\leq&\underset{k\rightarrow\infty}\liminf J_{\lambda_{k}}(u_{\lambda_{k}})=M.
\end{eqnarray*}
Consequently, $M\geq m_{\Omega}$. Then we get that
$$\underset{\lambda\rightarrow\infty}\lim m_{\lambda}=m_{\Omega}.$$
\end{proof}

\noindent{\bf The proof of Theorem 1.2 and 1.3.} We need to prove that for any sequence $\lambda_{k}\rightarrow\infty$, the corresponding $u_{\lambda_{k}}\in\mathcal{N}_{\lambda_{k}}$ satisfying $J_{\lambda_{k}}(u_{\lambda_{k}})=m_{\lambda_{k}}$ converges in $W^{2,2}(V)$ to a ground state solution $u_{0}$ of
\eqref{dirichlet} up to a subsequence.

By Remark \ref{psbound}, we have that $u_{\lambda_{k}}$ is bounded in $E_{\lambda_{k}}$ and the upper-bound is independent of $\lambda_{k}$. Consequently, $\{u_{\lambda_{k}}\}$ is also bounded in $W^{2,2}(V)$. Therefore, we can assume that there exists some $u_0\in W^{2,2}(V)\bigcap L^q(V)$ for any $q\in [2,+\infty)$, such that
$$u_{\lambda_{k}}\rightarrow u_{0}\ \ \hbox{in}\ \ L^{q}(V)$$
and
$$u_{\lambda_{k}}\rightharpoonup u_{0}\ \ \hbox{in}\ \ W^{2,2}(V).$$
From Lemma \ref{nu}, we have that $u_{0}\not\equiv0$ and on the other hand, as what we have done in Lemma \ref{groundstate}, we have that $u_{0}|_{\Omega^{c}}=0$. First, we claim that as $k\rightarrow\infty$, there hold
\begin{equation}\label{aterm}
\lambda_{k}\int_{V}a|u_{\lambda_{k}}|^{2}d\mu\rightarrow 0
\end{equation}
and
\begin{equation}\label{wterm}
\int_{V}|\Delta u_{\lambda_{k}}|^{2}+|\nabla u_{\lambda_{k}}|^{2}d\mu\rightarrow\int_{V}|\Delta u_{0}|^{2}+|\nabla u_{0}|^{2}d\mu.
\end{equation}
Otherwise, if for some $\theta>0$, there holds
$$\underset{k\rightarrow\infty}\lim\lambda_{k}\int_{V}a|u_{\lambda_{k}}|^{2}d\mu=\theta,$$
it follows that
\begin{eqnarray*}
\int_{\Omega\cup\partial\Omega}|\Delta u_{0}|^{2}+|\nabla u_{0}|^{2}d\mu+\int_{\Omega}|u_{0}|^{2}d\mu
&<&\int_{V}(|\Delta u_{0}|^{2}+|\nabla u_{0}|^{2}+|u_{0}|^{2})d\mu+\theta\\
&\leq&\underset{k\rightarrow+\infty}\liminf \int_{V}(|\Delta u_{\lambda_{k}}|^{2}+|\nabla u_{\lambda_{k}}|^{2}+(\lambda_{k}a+1)|u_{\lambda_{k}}|^{2})d\mu\\
&=&\underset{k\rightarrow+\infty}\liminf\int_{V}|u_{\lambda_{k}}|^{p}d\mu\\
&=&\int_{\Omega}|u_{0}|^{p}d\mu.
\end{eqnarray*}
By $p>2$, there exists some $t\in(0,1)$ such that $tu_{0}\in\mathcal{N}_{\Omega}$. On the other hand, if
$$\underset{k\rightarrow+\infty}\liminf\int_{V}|\Delta u_{\lambda_{k}}|^{2}+|\nabla u_{\lambda_{k}}|^{2}d\mu>\int_{V}|\Delta u_{0}|^{2}+|\nabla u_{0}|^{2}d\mu,$$
we also have $\int_{\Omega\cup\partial\Omega}|\Delta u_{0}|^{2}+|\nabla u_{0}|^{2}d\mu+\int_{\Omega}|u_{0}|^{2}d\mu<\int_{\Omega}|u_{0}|^{p}d\mu$. Then in both cases, we can find $t\in(0,1)$ such that $tu_{0}\in\mathcal{N}_{\Omega}$. Consequently, we obtain that
\begin{eqnarray*}
J_{\Omega}(tu_{0})&=&
\frac{p-2}{2p}\left(\int_{\Omega\cup \partial\Omega}|t\Delta u_{0}|^{2}+|t\nabla u_{0}|^{2}d\mu+\int_{\Omega}|tu_{0}|^{2}d\mu\right)\\
&=&\frac{p-2}{2p}t^{2}\left(\int_{\Omega\cup \partial\Omega}|\Delta u_{0}|^{2}+|\nabla u_{0}|^{2}d\mu+\int_{\Omega}|u_{0}|^{2}d\mu\right)\\
&<&\frac{p-2}{2p}\int_{V}(|\Delta u_{0}|^{2}+|\nabla u_{0}|^{2}+|u_{0}|^{2})d\mu\\
&\leq&\underset{k\rightarrow+\infty}\liminf\left[\frac{p-2}{2p}\int_{V}(|\Delta u_{\lambda_{k}}|^{2}+|\nabla u_{\lambda_{k}}|^{2}+(\lambda_{k}a+1)|u_{\lambda_{k}}|^{2})d\mu\right]\\
&=&\underset{k\rightarrow+\infty}\liminf J_{\lambda_{k}}(u_{\lambda_{k}})\\
&=&m_{\Omega},
\end{eqnarray*}
which is contradiction and the claim is proved.

Now we can prove that $u_{0}$ is a ground state solution of \eqref{dirichlet}.
In fact, since $J^{'}_{\lambda_{k}}(u_{\lambda_{k}})=0$, for any $\phi\in H(\Omega)\subset W^{2,2}(V)$, we have
$$\int_{V}(\Delta u_{\lambda_{k}}\Delta\phi+\nabla u_{\lambda_{k}}\nabla\phi+(\lambda_{k}a+1)u_{\lambda_{k}}\phi)d\mu=\int_{V}|u_{\lambda_{k}}|^{p-2}u_{\lambda_{k}}\phi d\mu.$$
Since $a(x)\phi(x)\equiv 0$, for any $x\in V$, we get
$$\int_{\Omega\cup\partial\Omega}(\Delta u_{\lambda_{k}}\Delta\phi+\nabla u_{\lambda_{k}}\nabla\phi)d\mu+\int_{\Omega}u_{\lambda_{k}}\phi d\mu=\int_{\Omega}|u_{\lambda_{k}}|^{p-2}u_{\lambda_{k}}\phi d\mu.$$
Let $k\rightarrow \infty$, the above equality becomes
$$\int_{\Omega\cup\partial\Omega}(\Delta u_{0}\Delta\phi+\nabla u_{0}\nabla\phi)d\mu+\int_{\Omega}u_{0}\phi d\mu=\int_{\Omega}|u_{0}|^{p-2}u_{0}\phi d\mu,$$
which tells us that $J^{'}_{\Omega}(u_{0})=0$, $u_0\in \mathcal{N}_{\Omega}$ and $u_0$ is a solution of \eqref{dirichlet}.

On the other hand, by \eqref{aterm} and \eqref{wterm}, we have
\begin{eqnarray*}
J_{\lambda_{k}}(u_{\lambda_{k}})
&=&\frac{1}{2}\int_{V}|\Delta u_{\lambda_{k}}|^{2}+|\nabla u_{\lambda_{k}}|^{2}+(\lambda_{k}a+1)|u_{\lambda_{k}}|^{2}d\mu-\frac{1}{p}\int_{V}|u_{\lambda_{k}}|^{p}d\mu\\
&=&\frac{1}{2}\int_{V}|\Delta u_{0}|^{2}+|\nabla u_{0}|^{2}+|u_{0}|^{2}d\mu-\frac{1}{p}\int_{V}|u_{0}|^{p}d\mu+o_{k}(1)\\
&=&\frac{1}{2}\int_{\Omega\cup\partial\Omega}|\Delta u_{0}|^{2}+|\nabla u_{0}|^{2}d\mu+\int_{\Omega}|u_{0}|^{2}d\mu-\frac{1}{p}\int_{\Omega}|u_{0}|^{p}d\mu+o_{k}(1)\\
&=&J_{\Omega}(u_{0})+o_{k}(1).
\end{eqnarray*}
Since $J_{\lambda_{k}}(u_{\lambda_{k}})=m_{\lambda_{k}}$, Lemma \ref{groundstate} tells $J_{\Omega}(u_{0})=m_{\Omega}$. Thus we get that $u_{0}$ is a solution of \eqref{dirichlet} which achieves the ground state and both Theorem \ref{existence2} and Theorem \ref{convergence} are proved.
$\Box$

\begin{remark}\label{strongconvergence}
Furthermore, we have $\underset{k\rightarrow \infty}\lim\|u_{\lambda_{k}}-u_{0}\|_{E_{\lambda_{k}}}= 0$ and consequently, $\{u_{\lambda_k}\}$ converges strongly to $u_0$ in $W^{2,2}(\Omega)$.
\end{remark}

\begin{proof}
Indeed, since $u_{\lambda_{k}}\in\mathcal{N}_{\lambda_{k}}$ and $u_{0}|_{\Omega^{c}}=0$, we have
\begin{eqnarray*}
\|u_{\lambda_{k}}-u_{0}\|_{E_{\lambda_{k}}}^2
&=&\int_{V}|\Delta (u_{\lambda_{k}}-u_{0})|^{2}+|\nabla (u_{\lambda_{k}}-u_{0})|^{2}+(\lambda_{k}a+1)|u_{\lambda_{k}}-u_{0}|^{2}d\mu\\
&=&\|u_{\lambda_{k}}\|_{E_{\lambda_{k}}}^2+\|u_{0}\|_{E_{\lambda_{k}}}^2-2\int_{V}\Delta u_{\lambda_{k}}\Delta u_{0}+\nabla u_{\lambda_{k}}\nabla u_{0}+(\lambda_{k}a+1)u_{\lambda_{k}}u_{0}d\mu\\
&=&\|u_{\lambda_{k}}\|_{E_{\lambda_{k}}}^2+\|u_{0}\|_{H(\Omega)}^2-2\int_{\Omega\cup\partial\Omega}\Delta u_{\lambda_{k}}\Delta u_{0}+\nabla u_{\lambda_{k}}\nabla u_{0}d\mu+\int_{\Omega}u_{\lambda_{k}}u_{0}d\mu\\
&=&\|u_{\lambda_{k}}\|_{E_{\lambda_{k}}}^2+\|u_{0}\|_{H(\Omega)}^2
-2\|u_{0}\|_{H(\Omega)}^2+o_{k}(1)\\
&=&\|u_{\lambda_{k}}\|_{E_{\lambda_{k}}}^2-\|u_{0}\|_{H(\Omega)}^2+o_{k}(1)\\
&=&\int_{V}|u_{\lambda_{k}}|^{p}d\mu-\int_{\Omega}|u_{0}|^{p}d\mu+o_{k}(1)\\
&=&\int_{V}|u_{\lambda_{k}}|^{p}d\mu-\int_{V}|u_{0}|^{p}d\mu+o_{k}(1)\\
&=&o_k(1).
\end{eqnarray*}
\end{proof}

\end{document}